\newcommand{\half}{\frac{1}{2}}
\newcommand{\thalf}{\tfrac{1}{2}}
\newcommand{\sume}{\mathop{{\sum}^{+}}}
\newcommand{\sumo}{\mathop{{\sum}^{-}}}
\newcommand{\intt}{\int_{-\infty}^{\infty}}
\newcommand{\mybinom}[2]{\Bigl(\begin{array}{@{}c@{}}#1\\#2\end{array}\Bigr)}
\newcommand{\thalfps}{\thalf + s}
\newcommand{\halfms}{\half - s}
\newcommand{\thalfms}{\thalf - s}
\newcommand{\chibar}{\overline{\chi}}
\numberwithin{equation}{section}
\newtheorem{theorem}{Theorem}
\newtheorem{lemma}[theorem]{Lemma}
\newtheorem{corollary}[theorem]{Corollary}
\begin{document}

\title{On moments of $L$-functions over Dirichlet characters}

\author{Avery Bainbridge}

\author{Rizwanur Khan}

\author{Ze Sen Tang}

\address{
Department of Mathematical Sciences\\ University of Texas at Dallas\\ Richardson, TX 75080-3021}
\email{Avery.Bainbridge@UTDallas.edu, Rizwanur.Khan@UTDallas.edu, ZeSen.Tang@UTDallas.edu}

\subjclass[2020]{11M06, 11F11} 
\keywords{Dirichlet $L$-functions, moments, stationary phase}
\thanks{The second and third authors were supported by the National Science Foundation grants DMS-2344044 and DMS-2341239}

\begin{abstract}  We give a new proof of Heath-Brown's full asymptotic expansion for the second moment of Dirichlet $L$-functions and we obtain a corresponding asymptotic expansion for a twisted first moment of Hecke-Maass $L$-functions. 
\end{abstract}

\maketitle

\section{Introduction}

The Dirichlet $L$-functions are ubiquitous objects in analytic number theory. They arise in the study of primes in arithmetic progressions, class numbers of number fields, and many other problems. For a primitive Dirichlet character $\chi$ of modulus $q$, the associated Dirichlet $L$-function is defined initially for $\Re(s)>1$ by the absolutely convergent Dirichlet series
\[
L(s,\chi)=\sum_{n\ge 1} \frac{\chi(n)}{n^s}
\]
and then by analytic continuation to the rest of $\mathbb{C}$. The behavior of $L(s,\chi)$ within the critical strip $0\le s\le 1$ is the most important and mysterious. An important area of research is to try to understand these $L$-functions at the center of the critical strip by evaluating the moments
\[
\sum_{\chi\bmod q } |L(\thalf, \chi)|^{2k}
\]
for $k\in\mathbb{N}$, as $q\to\infty$. Unfortunately our knowledge of these moments is very limited, with $k=2$ being the largest value for which an asymptotic formula is known. We specialize to the important case of $q$ an odd prime to describe these results. Heath-Brown \cite{hb2} proved an asymptotic formula for the fourth moment ($k=2$), as follows:
\begin{align}
\label{hb4} \sum_{\chi\bmod q } |L(\thalf, \chi)|^{4} = \frac{q-1}{2\pi^2} \log^4 q +O(\log^3 q).
\end{align}
Subsequently there was great interest in refining this asymptotic. Young \cite{you} obtained an asymptotic formula which additionally includes all lower order main terms up to a much sharper error term: one that saves a power of $q$ over the main term, unlike \eqref{hb4} which only saves a power of $\log q$. This error term was then improved in other works, the best result \cite{bfkmm}  currently being
\[
\sum_{\chi\bmod q } |L(\thalf, \chi)|^{4} = (q-1)P_4(\log q)+O(q^{1-\frac{1}{20}+\epsilon})
\]
for some degree four polynomial $P_4(x)$ and any $\epsilon>0$. For the second moment ($k=1)$, Heath-Brown \cite{hb} proved an asymptotic formula with arbitrarily high precision. That is, he obtained all main terms up to an error term that saves as large a power of $q$ as desired. Since then, different proofs of Heath-Brown's asymptotic expansion (and generalizations in various directions) have been given in \cite{katmat, ega, bet, nord2}. Our first contribution is another proof of Heath-Brown's result, using the principle of analytic continuation and stationary phase analysis. We then use our method to obtain a new asymptotic expansion for a cuspidal analogue of the second moment of Dirichlet $L$-functions, which improves on Corollary 1.7 of Drappeau and Nordentoft \cite{dranor}.

\begin{theorem}\label{result} {{\cite[cf. Equation (5)]{hb}}} Let $q$ be an odd prime. For any integer $N\ge 1$ we have
\begin{align}
\label{resulteqn}
  &\sume_{\chi\bmod q } |L(\thalf, \chi)|^2 = \frac{q-1}{2} \Big( \log \frac{q}{8\pi} +  \gamma -\frac{\pi}{2}  \Big) + 2 \zeta^2(\thalf) q^\half  - 2\zeta^2(\thalf) + \sum_{n=1 }^{N} c_n q^{-\frac{n}{2}}+O_N\Big(q^{\frac{-(N+1)}{2}}\Big),\\ 
 \label{resulteqn-odd}   &\sumo_{\chi\bmod q } |L(\thalf, \chi)|^2 = \frac{q-1}{2} \Big( \log \frac{q}{8\pi} +  \gamma + \frac{\pi}{2}  \Big) +  \sum_{n=1 }^{N} c_n' q^{-\frac{n}{2}}+O_N\Big(q^{\frac{-(N+1)}{2}}\Big),
\end{align}
for some constants $c_n, c_n'$, where $\sume$ denotes the sum over the even primitive Dirichlet characters, $\sumo$ denotes the sum over the odd primitive Dirichlet characters, and $\gamma$ is Euler's constant.
\end{theorem}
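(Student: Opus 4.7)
The plan is to treat the shifted moment
\[
M^\pm(\alpha,\beta) \;:=\; \sumpm_{\chi\bmod q} L(\thalf+\alpha,\chi)\,L(\thalf+\beta,\chibar)
\]
as a meromorphic function of two complex variables, derive an identity valid in the region where the defining series converge absolutely, and then invoke the principle of analytic continuation to pass to $\alpha=\beta=0$, whereupon the asymptotic expansion in $q$ is read off.

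First I would work in the region $\Re(\alpha),\Re(\beta)>\half$, where the $L$-functions are given by absolutely convergent Dirichlet series. Expanding and applying orthogonality of even/odd primitive characters modulo the prime $q$ (every nontrivial character mod $q$ being primitive), one reduces $M^\pm(\alpha,\beta)$ to double Dirichlet series over pairs $(m,n)$ with $(mn,q)=1$ and $m\equiv\pm n\pmod q$. The diagonal contribution $m=n$ equals $\tfrac{q-1}{2}\zeta(1+\alpha+\beta)$, and it is the pole of this $\zeta$ at $\alpha+\beta=0$ that will eventually combine with a ``swapped'' off-diagonal contribution to produce the finite quantity $\tfrac{q-1}{2}\bigl(\log(q/(8\pi))+\gamma\mp\pi/2\bigr)$; the constants $\log(8\pi)$ and $\mp\pi/2$ emerge from Taylor coefficients of the archimedean $\Gam$-factors, and the sign difference between the even and odd sums is forced by the parity of $\chi(-1)$ in the functional equation of $L(s,\chi)$.

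For the off-diagonal I would parametrize $m=\pm n+hq$ with $h\neq 0$, represent the resulting inner sums by Mellin inversion, and apply Poisson summation to obtain a new sum whose terms oscillate at a $q$-dependent frequency. The ensuing contour integrals are amenable to stationary phase analysis: the principal stationary point contribution, together with residues picked up while shifting contours past poles of $\zeta$ and of the Gamma factors, supplies both the ``swap'' term needed to cancel the diagonal pole and, in the even case only, the secondary main terms $2\zeta^2(\half)q^{\half}-2\zeta^2(\half)$; in the odd case this residue is absent because the archimedean factor for odd $\chi$ has no pole at $s=\half$. A full stationary phase expansion of the remaining oscillatory integral then delivers the series $\sum_{n=1}^{N}c_nq^{-n/2}$ with error $O_N(q^{-(N+1)/2})$, and analytic continuation in $(\alpha,\beta)$ transfers the resulting identity down to $\alpha=\beta=0$.

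The main obstacle will be the careful bookkeeping of polar contributions encountered during the continuation $\alpha,\beta\to 0$, so that the constants $\gamma,\log(8\pi),\pm\pi/2$ and $\zeta^2(\half)$ emerge from the correct Taylor coefficients of $\zeta$ and $\Gam$ around the relevant points, together with producing a stationary phase expansion uniform enough to yield every coefficient $c_n,c_n'$ down to the claimed error term.
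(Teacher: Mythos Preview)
Your outline is plausible but follows a genuinely different route from the paper. The paper does \emph{not} split into diagonal and off-diagonal. Instead it works with a single shift $s$ (so $\alpha=-s$, $\beta=s$), applies the functional equation to $L(\tfrac12-s,\chi)$ \emph{before} expanding, and uses orthogonality on the resulting Gauss sum to obtain a single additively twisted sum $\sum_n d(n)\cos(2\pi n/q)n^{-1/2-s}$ (respectively $\sin$ in the odd case). The cosine is then written as an inverse Mellin transform, the contour is shifted far left picking up residues of $\zeta^2$ (this is where the leading term $\tfrac{q-1}{2}(\log(q/8\pi)+\gamma-\pi/2)$ comes from), the functional equation of $\zeta^2$ is applied in the remaining integral, and finally stationary phase is carried out on that integral to recover $\zeta^2(\tfrac12)$ and the lower-order series. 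No Poisson summation and no diagonal/off-diagonal bookkeeping appear. Your two-variable diagonal/off-diagonal/Poisson scheme is closer in spirit to the earlier proofs cited in the introduction (Heath-Brown, Bettin, etc.) than to the method the paper is advertising.

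One point in your sketch needs correction. Your explanation for why the $2\zeta^2(\tfrac12)q^{1/2}$ term is present for even $\chi$ but absent for odd $\chi$ --- ``the archimedean factor for odd $\chi$ has no pole at $s=\tfrac12$'' --- is not right; neither $\Gamma(s/2)$ nor $\Gamma((s+1)/2)$ has a pole at $s=\tfrac12$. In the paper the even/odd discrepancy arises from two concrete sources: (i) extending the sum over $(n,q)=1$ to all $n$ produces extra $\zeta^2$-terms in the even case but not in the odd case, because $\sin(2\pi n/q)$ vanishes when $q\mid n$ while $\cos(2\pi n/q)$ does not; and (ii) in the stationary phase step the contributions of the two stationary points $t=\pm 2\pi nq$ add in the even case but cancel in the odd case, owing to an extra factor $i\,\mathrm{sgn}(t)$ coming from $\sin$ versus $\cos$. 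If you pursue your approach you will need an analogous parity mechanism, and it will not be a pole of the gamma factor.
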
 
\noindent One may also include the trivial character in the first sum -- this would contribute $|\zeta(\half) (1-q^{-\half})|^2$.

Let $f$ a Hecke-Maass cusp form for $SL_2(\mathbb{Z})$ with Laplacian eigenvalue $\frac14+t_f^2$ and Hecke eigenvalues $\lambda_f(n)$. For $\chi$ a primitive Dirichlet character mod $q$, let $L(s,f\times \chi)$ denote the $L$-function of $f$ twisted by $\chi$. For $\Re(s)>1$, this is given by
\[
L(s,f\times \chi) = \sum_{n\ge 1} \frac{\lambda_f(n) \chi(n)}{n^s},
\]
and elsewhere by analytic continuation. We have
\begin{theorem} \label{result2} Let $f$ be a Hecke-Maass cusp form for $SL_2(\mathbb{Z})$ and let $q$ be an odd prime. For any integer $N\ge 1$ we have
\begin{align}
\label{resulteqn2}
&\sume_{\chi\bmod q } \frac{\tau(\chi)}{q^\half} L(\thalf, f\times \overline{\chi}) = 2 L(\thalf, f) q^\half  - \lambda_f(q) L(\thalf, f)  + \sum_{n=1 }^{N} c_{n,f} q^{-\frac{n}{2}}+O_{N,f}\Big(q^{\frac{-(N+1)}{2}}\Big),\\
\nonumber &\sumo_{\chi\bmod q } \frac{\tau(\chi)}{q^\half} L(\thalf, f\times \overline{\chi}) = \sum_{n=1 }^{N} c_{n,f}'  q^{-\frac{n}{2}}+O_{N,f}\Big(q^{\frac{-(N+1)}{2}}\Big),
\end{align}
for some constants $c_{n,f}, c_{n,f}'$, where where $\sume$ denotes the sum over the even primitive Dirichlet characters, $\sumo$ denotes the sum over the odd primitive Dirichlet characters, and $\tau(\chi)$ denotes the Gauss sum. 
\end{theorem}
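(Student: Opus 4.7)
We adapt the method of Theorem \ref{result}. For $\Re(s)>1$, expanding the Dirichlet series for $L(s,f\times\overline\chi)$, substituting the Gauss-sum identity $\tau(\chi)\overline\chi(n)=\sum_{a\bmod q}\chi(a)e(an/q)$ (valid for $(n,q)=1$), and applying orthogonality of the even (resp.\ odd) primitive characters mod $q$ reduces the moment, after isolating the trivial character, to
\begin{align*}
\sume_\chi \frac{\tau(\chi)}{q^{\thalf}} L(s,f\times\overline\chi) &= \frac{q-1}{q^{\thalf}}\sum_{(n,q)=1}\frac{\lambda_f(n)\cos(2\pi n/q)}{n^s} + \frac{L(s,f)\bigl(1-\lambda_f(q)q^{-s}+q^{-2s}\bigr)}{q^{\thalf}},\\
\sumo_\chi \frac{\tau(\chi)}{q^{\thalf}} L(s,f\times\overline\chi) &= \frac{(q-1)i}{q^{\thalf}}\sum_{(n,q)=1}\frac{\lambda_f(n)\sin(2\pi n/q)}{n^s}.
\end{align*}
The task then becomes analytically continuing the additively twisted series $F_\pm(s):=\sum_{(n,q)=1}\lambda_f(n)\,e(\pm n/q)\,n^{-s}$ to $s=\thalf$ and extracting their asymptotic behavior in $q$.

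Following the strategy used for Theorem \ref{result}, we accomplish this via the $GL_2$ Voronoi summation formula applied with a smooth test function that is later removed by Mellin--Barnes (the ``analytic continuation'' step). Voronoi converts $F_\pm(s)$ into a dual additive sum of the shape
\[
F_\pm(s) \ \sim\ q^{1-2s}\sum_{\epsilon=\pm}\gamma_\epsilon(s)\sum_{n\ge 1}\frac{\lambda_f(n)\,e(\mp n/q)}{n^{1-s}}\,H_\epsilon\!\left(\frac{n}{q^2}\right),
\]
where $\gamma_\epsilon(s)$ is a ratio of $\Gamma$-factors and $H_\epsilon$ is a Bessel-type integral transform (built from $J_{2it_f},Y_{2it_f},K_{2it_f}$) whose argument is small as $q\to\infty$. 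Representing $H_\epsilon$ by its Mellin--Barnes contour and shifting, one peels off a finite sequence of polar residues that furnish the main terms, leaving a remainder integral controlled by stationary phase. The leading residue contributes the main term $2L(\thalf,f)q^{\thalf}$ in the even case; combining the next residue with the prefactor $(q-1)/q^{\thalf}=q^{\thalf}-q^{-\thalf}$ and the auxiliary piece $L(s,f)\bigl(1-\lambda_f(q)q^{-s}+q^{-2s}\bigr)/q^{\thalf}$ yields the constant main term $-\lambda_f(q)L(\thalf,f)$, and successive residues produce the coefficients $c_{n,f}$.

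For the odd sum one has the odd combination $F_+(s)-F_-(s)$ in place of $F_+(s)+F_-(s)$. The leading stationary-phase/residue contributions are even under $n\mapsto -n$ and therefore cancel in the odd case, which is why the expansion begins only at size $q^{-\thalf}$, with no terms of size $q^{\thalf}$ or $1$. \emph{The main technical obstacle} is precisely this stationary-phase analysis of the Bessel kernels $H_\epsilon$: one must extract every polar contribution arising from the interplay of the $\Gamma$-factors of $x^{-s}$ with those of the Bessel transforms, uniformly in $q$ and to arbitrary depth $N$, in order to achieve the remainder $O_{N,f}\bigl(q^{-(N+1)/2}\bigr)$.
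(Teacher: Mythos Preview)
Your reduction to the cosine/sine-twisted sums is correct and matches the paper. However, your proposed continuation via the $GL_2$ Voronoi formula is \emph{not} the route the paper takes, and your phrase ``following the strategy used for Theorem~\ref{result}'' misidentifies that strategy: Theorem~\ref{result} does not use Voronoi summation at all. The paper proceeds for Theorem~\ref{result2} exactly as for Theorem~\ref{result}: insert the Mellin representation of $\cos$ (resp.\ $\sin$), sum over $n$ to form the contour integral
\[
\frac{1}{2\pi i}\int_{(-3/4)}\Gamma(w)\cos\!\big(\tfrac{\pi w}{2}\big)\Big(\frac{q}{2\pi}\Big)^{w}L\big(\tfrac12+s+w,f\big)\,dw,
\]
shift the $w$-contour far to the left (here $L(s,f)$ is entire, so there is no pole at $w=\tfrac12-s$ and hence no main term of size $q\log q$), apply the functional equation of $L(\cdot,f)$ to obtain the ratio of Gamma factors
\[
(-1)^{\kappa_f}\frac{\Gamma(\tfrac14+\tfrac{\kappa_f}{2}+\tfrac{it_f}{2}-\tfrac{s}{2}-\tfrac{w}{2})\Gamma(\tfrac14+\tfrac{\kappa_f}{2}-\tfrac{it_f}{2}-\tfrac{s}{2}-\tfrac{w}{2})}{\Gamma(\tfrac14+\tfrac{\kappa_f}{2}+\tfrac{it_f}{2}+\tfrac{s}{2}+\tfrac{w}{2})\Gamma(\tfrac14+\tfrac{\kappa_f}{2}-\tfrac{it_f}{2}+\tfrac{s}{2}+\tfrac{w}{2})},
\]
re-expand $L(\tfrac12-s-w,f)$ as a Dirichlet series, and run the same Stirling/stationary-phase analysis as in Section~5 (the phase is again $\exp(-it\log\frac{|t|}{2\pi e nq})$). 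The term $2L(\tfrac12,f)q^{1/2}$ arises from two equal halves, precisely as $2\zeta^2(\tfrac12)q^{1/2}$ did in Theorem~\ref{result}: one half from the ``$+1$'' in the Mellin representation of $\cos$ combined with the Hecke correction at $q$, and one half from the leading stationary-phase term of the shifted integral.

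Your Voronoi route is a legitimate alternative, since Voronoi for $f$ is a repackaging of the functional equation of $L(s,f)$, and the Bessel kernels you write down have Mellin--Barnes representations built from the same Gamma ratios. But two points deserve care. First, for a cusp form Voronoi has no polar main term, so in your framework the entire $2L(\tfrac12,f)q^{1/2}$ must come out of the Mellin--Barnes expansion of $H_\epsilon(n/q^2)$; your sketch does not make transparent how the factor of $2$ and the precise power of $q$ emerge there, whereas in the paper's approach this is immediate from the two sources just named. Second, the paper's point is that nothing beyond the functional equation and analytic continuation of $L(s,f)$ is needed---no Bessel asymptotics, no separate Voronoi input---so the argument is strictly lighter than what you propose.
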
 
\noindent We have that $L(\half,f)=0$ when $f$ is an odd Hecke-Maass cusp form, and $\lambda_f(q)\ll q^{\frac{7}{64}+\epsilon}$ by \cite[Appendix 2]{kim}.

We will give a detailed proof of Theorem \ref{result}. The proof of Theorem \ref{result2} is very similar, so we will just sketch the differences to the proof of Theorem \ref{result}.

\subsection{Connections to reciprocity relations} The moments above can be generalized to twisted moments, which are found to obey certain reciprocity relations. This phenomenon was first observed by Conrey \cite{con}, who discovered an identity between the twisted moments 
\[
\sum\limits_{\chi\bmod q} |L(\thalf, \chi)|^2 \chi(p) \ \ \  \text{ and  } \sum\limits_{\chi\bmod p} |L(\thalf, \chi)|^2 \chi(-q),
\]
up to a modest error term. Conrey's result was improved by Young \cite{you2} and then extended to a full asymptotic series by Bettin \cite{con}, which gives Heath-Brown's asymptotic series as a special case. Similarly, reciprocity relations have been considered for the twisted moments
\[
\sum\limits_{\chi\bmod q} \tau(\chi) L(\thalf, f\times \overline{\chi}) \chi(p),
\]
where $f$ is either a holomorphic Hecke cusp form or non-holomorphic Hecke-Maass cusp form for $SL_2(\mathbb{Z})$ (or other congruence groups). Nordentoft \cite[Theorem 4.4]{nor} established an exact reciprocity formula in the holomorphic case, from which follows exact formulae for the moments considered in Theorem \ref{result2}, such that the series $\sum c_{n,f} q^{-n}$ has only finitely many terms and there is no error term. For Hecke-Maass cusp forms, Drappeau and Nordentoft \cite[Corollary 1.7]{dranor} have established a reciprocity formula up to a modest error term. This does not yield our Theorem \ref{result2} which allows for an arbitrarily strong error term. The proofs of the reciprocity formulae in \cite{bet, nor, dranor} can all be seen in the light of a special property known as quantum modularity for additive twists of the respective $L$-functions. This works out very elegantly especially in the case of holomorphic Hecke cusp forms, but is more complicated for Hecke-Maass cusp forms. On the other hand, we believe that our proof has the advantage of being simpler, because besides the analytic techniques, the only arithmetic input required is the functional equation and analytic continuation of the $L$-functions. Our method can also be extended to establish reciprocity relations for twisted moments. This will be addressed in forthcoming work of Agniva Dasgupta and the second and third authors.

\noindent

\section{Sketch}

In this section, we describe the main ideas of the proof of \eqref{resulteqn} in Theorem \ref{result}. Fix an odd prime $q$. Define for $s\in\mathbb{C}$ the entire function
\[
 F(s,q):=\sume_{\chi\bmod q} L(\thalfms, \chi) L(\thalfps, \chibar).
 \]
Our strategy is to derive a broader statement of the form
$F(s,q) = G(s,q)$,
where $G(s,q)$ is a function that is holomorphic on some open neighborhood of $s=0$, and then to recover Theorem \ref{result} as the specialization at $s=0$. To this end we begin by restricting $F(s,q)$ to $s\in \mathcal{S}_1$, an open set positioned far to the right in $\mathbb{C}$, as pictured in Figure \ref{figure}, and calling the restriction $H(s,q)$. After an application of the functional equation of $L(\thalfms, \chi)$, we obtain 
\begin{align}
\label{compare1} H(s,q) = \frac{\Gamma\left(\tfrac{1}{4}+\tfrac{s}{2}\right)}{\Gamma\left(\tfrac{1}{4}-\tfrac{s}{2}\right) \pi^s q^{\half-s}} \sume_{\chi\bmod q} \tau(\chi)   L(\thalfps,\chibar)^2.
\end{align}
Expanding the $L$-functions as absolutely convergent Dirichlet series, opening the Gauss sum, and executing the character sum using character orthogonality shows that $H(s,q)$ essentially equals
\[
\sum_{n\ge 1} \frac{\cos(\frac{2\pi n}{q})d(n)}{n^{\half+s}},
\]
where $d(n)$ is the divisor function. Writing $\cos(\frac{2\pi n}{q})$ as a Mellin transform, we recast this sum in terms of the integral 
\begin{align}
\label{compare2} \frac{1}{2\pi i}  \int\limits_{(-\frac34)}  \Gamma(w)\cos(\tfrac{\pi w}{2}) \Big(\frac{q}{2\pi}\Big)^w \zeta^2 (\thalf+s+w) \ dw.
\end{align}
We then move the line of integration far to the left, picking up residues that contribute to the main terms on the right hand side of \eqref{resulteqn}, with the leading main term arising from the double pole of $\zeta^2(\thalf+s+w)$ at $w=\thalf-s$. However, the shifted integral (now positioned far to the left at $\Re(w)=-c$ for $c>0$ large) does not converge absolutely near $s=0$, and so we need additional ideas to obtain meromorphic continuation near $s=0$. Keeping in mind that $w$ is far to the left in the shifted integral, we apply the functional equation of $\zeta^2(\thalf+s+w)$ to obtain $\zeta^2(\thalf-s-w)$, which we can expand into an absolutely convergent Dirichlet series since $-w$ is far to the right. Interchanging the order of integration and summation, we get 
\begin{align}
\label{sum-integral} \sum_{n\ge 1} \frac{d(n)}{n^{\half-s}}  \bigg(\frac{1}{2\pi i} \int\limits_{(-c)}   \Gamma(w)\cos(\tfrac{\pi w}{2}) \Big(\frac{nq}{2\pi}\Big)^w  \frac{\pi^{-\frac12+s+w} \Gamma^2(\frac14-\frac{s}{2}-\frac{w}{2}) }{\pi^{-\frac12-s-w} \Gamma^2(\frac14+\frac{s}{2}+\frac{w}{2})}  dw\bigg).
\end{align}
Evaluating the inner integral is a purely analytic problem (it's the inverse Mellin transform of a product of special functions). We approach this problem using Stirling's estimates and techniques of stationary phase analysis. We find that the integrand has oscillation given by 
\begin{align}
\label{osc-factor} \exp\left(-it \log \frac{|t|}{2\pi e n q}\right),
\end{align}
for large $|t|=|\Im(w)|$. There are stationary points at $t=\pm 2\pi n q$, and at these points the oscillatory factor \eqref{osc-factor} is independent of $n$. It is interesting to note that this step is special to the second moment, for such a simplification does not happen if we were trying to apply our method to evaluate higher moments.  It turns out that the leading term of the integral is essentially $(nq)^{-2s}$, which is especially simple, and from this the leading term of the sum \eqref{sum-integral} is 
\[
q^{-2s} \sum_{n\ge 1} \frac{d(n)}{n^{\half+s}}= q^{-2s} \zeta^2(\thalf+s).
\]
Thus we recover the Riemann Zeta function and we are able to use its meromorphic continuation from $\mathcal{S}_1$ to $\mathcal{S}_2$, and in particular evaluate at $s=0$. The lower order terms arising from the stationary phase analysis are of a similar shape, their values at $s=0$ decreasing by factors of $q^{-1}$, and thus contribute to the lower order main terms on the right hand side of \eqref{resulteqn}. We denote by $G(s,q)$ the sum of all these terms plus the residues obtained above, and get that $F(0,q)=G(0,q)$.

\begin{figure}
    \begin{tikzpicture}[scale=0.85]
        \draw[-latex] (-6.2,0) -- (6.7,0) node[right] {$\Re(s)$};
        \draw[-latex] (0,-2.2) -- (0,2.2) node[above] {$\Im(s)$};
        \coordinate [label=below left:0] (a) at (0,0);
        \draw[red] (4,0.5) -- (5,0.5);
        \draw[red] (4,-0.5) -- (5,-0.5);
        \draw[red] (4,0.5) -- (4,-0.5);
        \draw[red] (5,0.5) -- (5,-0.5);
        \draw[blue] (-0.5,0.5) -- (5,0.5);
        \draw[blue] (-0.5,-0.5) -- (5,-0.5);
        \draw[blue] (-0.5,0.5) -- (-0.5,-0.5);
        \draw[blue] (5,0.5) -- (5,-0.5);
        \node[red] at (4.5,0.2) {$\mathcal{S}_1$};
        \node[blue] at (2,0.2) {$\mathcal{S}_2$};
    \end{tikzpicture}
    \caption{} \label{figure}
\end{figure}

For the proof of \eqref{resulteqn2} in Theorem \ref{result2}, we observe that the left hand side is the value at $s=0$ of
\[
q^{-\half}\sume_{\chi\bmod q} \tau(\chi)   L(\thalfps,f\times \chibar),
\]
which is analogous to \eqref{compare1}. We take the Dirichlet series expansion of $L(\thalfps,f\times \chibar)=\sum_{n\ge 1} \frac{\lambda_f(n)\overline{\chi}(n)}{n^{\half+s}}$ for $s\in\mathcal{S}_1$, which is analogous to $L(\thalfps, \chibar)^2=\sum_{n\ge 1} \frac{d(n)\overline{\chi}(n)}{n^{\half+s}}$, except that $d(n)$ is replaced with $\lambda_f(n)$. The proof proceeds as for Theorem \ref{result}, except for a few changes. In \eqref{compare2}, we get the function $L(\half +s+w,f)$, which is entire on $\mathbb{C}$, instead of $\zeta^2(\half +s +w)$.  As a result, when we shift contours to the left, there is no pole at $s=\half-w$, unlike the case was for $\zeta^2(\half +s+w)$, and therefore the leading main term present in \eqref{resulteqn} does not arise. When the functional equation \cite[Proposition 3.13.5]{gol} is applied to $L(\half +s+w,f)$, we obtain the ratio of Gamma functions 
\[
(-1)^{\kappa_f} \frac{\Gamma(\frac14+ \frac{\kappa_f}{2} + \frac{it_f}{2} -\frac{s}{2}-\frac{w}{2}  )\Gamma(\frac14 + \frac{\kappa_f}{2}  - \frac{it_f}{2}  -\frac{s}{2}-\frac{w}{2} ) }{  \Gamma(\frac14 + \frac{\kappa_f}{2}  + \frac{it_f}{2} + \frac{s}{2}+\frac{w}{2}  )\Gamma(\frac14 + \frac{\kappa_f}{2}  - \frac{it_f}{2} +\frac{s}{2}+\frac{w}{2} )  } 
\]
 instead of what is seen in \eqref{sum-integral}, where $\kappa_f= 0$ if $f$ is an even Hecke-Maass form and  $\kappa_f= 1$ if $f$ is an odd Hecke-Maass form. However this difference is immaterial as the oscillatory behavior \eqref{osc-factor} remains the same after Stirling's estimates are applied. In the end, we do not get main terms of size $q\log q$ and $q$ in \eqref{resulteqn2}, and we have the lower order main term $ L(\half, f) (2q^{\half} -\lambda_f(q))$  instead of the $\zeta^2(\half) (2q^{\half} -d(q))$ term that is present in \eqref{resulteqn}.
 
\section{Preliminaries}
We recall here fundamental facts about Dirichlet characters, Dirichlet $L$-functions, and the Mellin transform. Throughout, $q$ is an odd prime. Recall that a Dirichlet character $\chi$ of modulus $q$ satisfies $\chi(-1)=\pm 1$, and it is called even if $\chi(-1)=1$ and odd otherwise. Since $q$ is prime, $\chi$ is primitive if and only if $\chi$ is not the principal character (which satisfies $\chi(n)=1$ for all $n$ coprime to $q$). We write $e(x)$ for the additive character $e^{2\pi i x}$.
\begin{lemma}[Functional equation]\label{dl-funceqn}
    Let $\chi$ be a primitive Dirichlet character of modulus $q$. Let $\delta=0$ if $\chi$ is even and $\delta=1$ if $\chi$ is odd. Let $ \tau(\chi)=\sum\limits_{k=1}^{q-1} e(\tfrac{k}{q}) \chi(k)$
    denote the Gauss sum. Then for all $s \in \mathbb{C}$ we have
    \[L(s,\chi) =\frac{\tau(\chi)}{i^\delta} \frac{\pi^{s-\half}}{q^s} \frac{\Gamma(\tfrac{1-s+\delta}{2})}{\Gamma(\tfrac{s+\delta}{2})} L(1-s,\chibar).\]
    For the Riemann Zeta function, we have
        \begin{align}
    \label{fe-2}   \zeta(s) =  \pi^{s-\half} \frac{\Gamma(\tfrac{1-s}{2})}{\Gamma(\tfrac{s}{2})} \zeta(1-s) = \pi^s 2^{s-1} \cos(\tfrac{\pi}{2}(1-s)) \Gamma(1-s) \zeta(1-s).
        \end{align}
\end{lemma}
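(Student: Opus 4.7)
The plan is to prove both functional equations by the classical Riemann--Hecke method: write the completed $L$-function as a Mellin transform of a theta series and then exploit the transformation of that theta series under $y\mapsto 1/y$ coming from Poisson summation.

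For the Dirichlet $L$-function, introduce the theta series
\[
\theta_\chi(y):=\sum_{n\in\mathbb{Z}} n^\delta \chi(n)\, e^{-\pi n^2 y/q},
\]
with the convention $0^0:=1$ when $\delta=0$. For $\Re(s)>1$, applying the gamma integral termwise yields the Mellin representation
\[
\Lambda(s,\chi):=\left(\frac{q}{\pi}\right)^{(s+\delta)/2}\Gamma\!\left(\frac{s+\delta}{2}\right)L(s,\chi)=\tfrac12\int_0^\infty \theta_\chi(y)\, y^{(s+\delta)/2}\,\frac{dy}{y}.
\]
The key input is the theta transformation
\[
\theta_\chi(1/y)=\frac{\tau(\chi)}{i^\delta\sqrt{q}}\, y^{1/2+\delta}\,\theta_{\chibar}(y),
\]
which I would establish by using primitivity to write $\chi(n)=\tau(\chi)^{-1}\sum_{a\bmod q}\chibar(a)\, e(an/q)$, and then applying Poisson summation to $x\mapsto x^\delta e^{-\pi x^2/(qy)}$. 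The odd case $\delta=1$ uses the Fourier transform of $xe^{-\pi x^2 t}$, which supplies the factor $i^{-\delta}$.

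Given the theta transformation, the functional equation follows in the usual way: split the Mellin integral at $y=1$ and apply $y\mapsto 1/y$ to the piece on $(0,1)$, yielding simultaneously the meromorphic continuation of $L(s,\chi)$ and the symmetric identity $\Lambda(s,\chi)=\frac{\tau(\chi)}{i^\delta\sqrt{q}}\Lambda(1-s,\chibar)$. Unwinding the definition of $\Lambda$ and simplifying the $\pi$- and $q$-powers yields the asserted asymmetric form. The Riemann zeta case follows the same argument applied to Jacobi's theta function $\theta(y)=\sum_{n\in\mathbb{Z}} e^{-\pi n^2 y}$, whose transformation $\theta(1/y)=y^{1/2}\theta(y)$ is the standard Poisson summation of a Gaussian; this produces the first displayed form for $\zeta(s)$.

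The second form is then a rewriting via the gamma identity
\[
\frac{\Gamma((1-s)/2)}{\Gamma(s/2)}=\frac{2^s\Gamma(1-s)\sin(\pi s/2)}{\sqrt{\pi}},
\]
which follows from Legendre's duplication formula $\Gamma(z)\Gamma(z+\tfrac12)=2^{1-2z}\sqrt{\pi}\,\Gamma(2z)$ taken at $z=(1-s)/2$, combined with Euler's reflection formula $\Gamma(s/2)\Gamma(1-s/2)=\pi/\sin(\pi s/2)$, together with the elementary identity $\sin(\pi s/2)=\cos(\pi(1-s)/2)$. The main obstacle is really only the theta transformation: the Poisson-summation calculation has to produce the root number $\tau(\chi)/(i^\delta\sqrt{q})$ on the nose, and the odd-character case demands the polynomial-weighted Gaussian. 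Everything after that is bookkeeping.
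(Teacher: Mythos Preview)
Your proof is correct, but note that the paper does not actually prove this lemma: it is stated as a standard preliminary fact, with the only justification being the one-line remark that the second form of the zeta functional equation follows from the first via the reflection formula $\Gamma(1-z)\Gamma(z)\sin(\pi z)=\pi$ and the duplication formula $2^{2z-1}\Gamma(z)\Gamma(z+\tfrac12)=\sqrt{\pi}\,\Gamma(2z)$. You have supplied the full classical Riemann--Hecke theta argument, which is of course the canonical proof, and your derivation of the second form via duplication and reflection matches what the paper asserts. So there is nothing to compare here beyond noting that you have written out what the paper takes for granted.
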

\noindent The equation \eqref{fe-2}  uses the reflection formula $\Gamma(1-z)\Gamma(z)\sin(\pi z) = \pi$ and duplication formula $2^{2z-1} \Gamma(z) \Gamma(z+\thalf)=\sqrt{\pi} \Gamma(2z).$
\begin{lemma} [Orthogonality relations] \label{orthog-1}
    Suppose $(nm,q) = 1$. Let $\sume$ (respectively $\sumo$) denote the sum over the even (respectively odd) primitive Dirichlet characters. Then
    \[
    \sume_{\chi\bmod q} \chi(n)\chibar(m) = \begin{cases}\frac{q-3}{2}&\textnormal{if } n \equiv \pm m \bmod q, \\-1&\textnormal{otherwise,}\end{cases}, \ \ \ 
    \sumo_{\chi\bmod q} \chi(n)\chibar(m) = \begin{cases} \pm \frac{q-1}{2}&\textnormal{if } n \equiv \pm m \bmod q, \\
0 &\textnormal{otherwise.}\end{cases}\]
 \end{lemma}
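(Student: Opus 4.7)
The plan is to derive both identities from the standard orthogonality relation for the full group of Dirichlet characters modulo $q$, namely
\[
\sum_{\chi \bmod q} \chi(n)\chibar(m) = \begin{cases} q-1 & \text{if } n\equiv m\bmod q,\\ 0 & \text{otherwise,}\end{cases}
\]
valid whenever $(nm,q)=1$, together with the parity projector. Since $q$ is an odd prime, a character mod $q$ is primitive iff it is non-principal, and the principal character is even, so $\sume$ is the sum over all even characters minus the principal contribution, while $\sumo$ is just the sum over all odd characters.

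First I would write the projector onto even (respectively odd) characters as $\tfrac{1}{2}(1\pm \chi(-1))$. Applying this to the inner sum gives
\[
\sum_{\chi\text{ even}} \chi(n)\chibar(m) \;=\; \tfrac{1}{2}\sum_{\chi\bmod q}\chi(n)\chibar(m) \;+\; \tfrac{1}{2}\sum_{\chi\bmod q}\chi(-n)\chibar(m),
\]
and likewise with a minus sign between the two terms for the odd projection. Each piece is handled by the standard orthogonality displayed above, applied once with $n$ and once with $-n$. Note that $n\equiv m$ and $-n\equiv m$ cannot both occur under $(nm,q)=1$ with $q$ an odd prime, since they would force $2m\equiv 0\bmod q$; so the two indicator conditions are mutually exclusive.

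For the even sum this yields $\tfrac{q-1}{2}$ when $n\equiv \pm m\bmod q$ and $0$ otherwise. Subtracting the contribution $\chi_0(n)\chibar_0(m)=1$ of the principal character produces the stated values $\tfrac{q-3}{2}$ and $-1$. For the odd sum the same computation yields $+\tfrac{q-1}{2}$ when $n\equiv m$, $-\tfrac{q-1}{2}$ when $n\equiv -m$, and $0$ otherwise; here no subtraction is needed since the principal character is even.

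There is no real obstacle — the proof is a one-paragraph bookkeeping argument combining classical orthogonality with the parity projector. The only point requiring a brief remark is the mutual exclusivity of $n\equiv m$ and $n\equiv -m\bmod q$ noted above, which ensures that the two cases in the statement do not overlap.
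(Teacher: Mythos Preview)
Your proof is correct and follows essentially the same approach as the paper: both use the standard orthogonality relation together with the parity projector $\tfrac{1}{2}(1\pm\chi(-1))$, then subtract the principal character's contribution in the even case. Your explicit remark on the mutual exclusivity of $n\equiv m$ and $n\equiv -m$ is a nice touch that the paper leaves implicit.
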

\begin{proof}
The Dirichlet characters of modulus $q$ satisfy the following orthogonality property:
    \[\sum_{\chi\bmod q} \chi(n)\chibar(m) = \begin{cases} q-1&\textnormal{if } n \equiv m \bmod q,\\0&\textnormal{otherwise.}\end{cases}\]
    The result for the sum over even primitive characters then follows from the equation
    \[
    \sume_{\chi\bmod q} \chi(n)\chibar(m) =
  -1+  \sum_{\substack{\chi\bmod q\\ \chi(-1)=1}} \chi(n)\chibar(m) 
    = -1 + \sum_{\chi\bmod q} \left(\frac{1 + \chi(-1)}{2}\right) \chi(n)\chibar(m), 
    \]
    where the $-1$ term serves to subtract the contribution of the trivial character and the $\frac{1 + \chi(-1)}{2}$ factor serves to pick out the condition $\chi(-1)=1$ for the primitive characters. The result for the sum over odd primitive characters follows from the equation
    \[
    \sumo_{\chi\bmod q} \chi(n)\chibar(m) =
  \sum_{\substack{\chi\bmod q\\ \chi(-1)=-1}} \chi(n)\chibar(m) 
    =  \sum_{\chi\bmod q} \left(\frac{1 - \chi(-1)}{2}\right) \chi(n)\chibar(m). \qedhere
    \]
\end{proof}

\begin{corollary} \label{orthog-euler}
    Suppose $(n,q) = 1$. Then 
    \begin{align*}
     \sume_{\chi\bmod q} \tau(\chi)\overline{\chi}(n) =  (q-1) \cos(\tfrac{2 \pi n}{q}) + 1,  \ \ \ \ \ \ 
        \sumo_{\chi\bmod q} \frac{\tau(\chi)}{i} \overline{\chi}(n) =  (q-1) \sin(\tfrac{2 \pi n}{q}).
    \end{align*}
\end{corollary}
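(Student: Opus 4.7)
The plan is to expand the Gauss sum $\tau(\chi)=\sum_{k=1}^{q-1}e(\tfrac{k}{q})\chi(k)$ in the definition, interchange the order of summation, and feed the resulting inner character sum $\sume_{\chi}\chi(k)\chibar(n)$ (respectively $\sumo_{\chi}\chi(k)\chibar(n)$) into Lemma \ref{orthog-1}. Since $(n,q)=1$ and $q$ is an odd prime, the residues $\pm n \bmod q$ are distinct and both lie in $\{1,\ldots,q-1\}$, so the resonance condition $k\equiv\pm n \bmod q$ picks out exactly two values of $k$.

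For the even sum, Lemma \ref{orthog-1} contributes $\tfrac{q-3}{2}$ when $k\equiv\pm n \bmod q$ and $-1$ for each of the remaining $q-3$ values of $k$. The two resonant terms pair the exponentials into a cosine, producing $(q-3)\cos(\tfrac{2\pi n}{q})$. For the non-resonant contribution I would invoke the standard root-of-unity identity $\sum_{k=1}^{q-1}e(\tfrac{k}{q})=-1$, so that
\[
-\sum_{\substack{k=1\\ k\not\equiv \pm n}}^{q-1} e(\tfrac{k}{q}) = 1 + e(\tfrac{n}{q}) + e(\tfrac{-n}{q}) = 1 + 2\cos(\tfrac{2\pi n}{q}).
\]
Adding the two contributions gives $(q-1)\cos(\tfrac{2\pi n}{q})+1$, as required.

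For the odd sum, Lemma \ref{orthog-1} makes the inner character sum vanish outside $k\equiv\pm n \bmod q$, so no leftover root-of-unity accounting is needed. The two resonant terms contribute with opposite signs $\pm\tfrac{q-1}{2}$, pairing the exponentials into $\tfrac{q-1}{2}\bigl(e(\tfrac{n}{q})-e(\tfrac{-n}{q})\bigr)=i(q-1)\sin(\tfrac{2\pi n}{q})$; dividing by $i$ yields the second stated identity. The only mild subtlety in the whole argument is keeping track of the uniform $-1$ contributions from non-resonant $k$ in the even case, which is precisely what the geometric-sum identity above handles; beyond that, the corollary is a direct and routine consequence of Lemma \ref{orthog-1}.
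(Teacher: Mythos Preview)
Your proof is correct and follows essentially the same approach as the paper: open the Gauss sum, apply Lemma~\ref{orthog-1} to the inner character sum, and use the geometric-series identity $\sum_{k=1}^{q-1}e(\tfrac{k}{q})=-1$ to handle the non-resonant terms. The only cosmetic difference is that the paper first makes the change of variable $k\mapsto kn\bmod q$ so that the resonant values become $k=1$ and $k=q-1$, whereas you work directly with $k\equiv\pm n\bmod q$; the arithmetic is identical.
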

\begin{proof}
We only give the proof for the sum over the even primitive characters (as the proof for the sum over the odd primitive characters is similar). We have
\[
\sume_{\chi\bmod q} \tau(\chi)\overline{\chi}(n) = \sum_{k=1}^{q-1} e\Big(\frac{k}{q}\Big) \sume_{\chi\bmod q} \chi(k) \overline{\chi}(n) = \sum_{k=1}^{q-1} e\Big(\frac{kn}{q}\Big) \sume_{\chi\bmod q} \chi(kn) \overline{\chi}(n).
\]
We separate out the first $(k=1)$ and last ($k=q-1\equiv -1 \bmod q$) terms of the $k$-sum, to get
    \begin{align*}
        \sum_{k=1}^{q-1} e(\tfrac{kn}{q}) &\sume_{\chi\bmod q} \chi(kn)\chibar(n) \\
        = e(\tfrac{n}{q}) &\sume_{\chi\bmod q} \chi(n)\chibar(n) + e(\tfrac{-n}{q}) \sume_{\chi\bmod q} \chi(-n)\chibar(n) + \sum_{k=2}^{q-2} e(\tfrac{kn}{q}) \sume_{\chi\bmod q} \chi(kn)\chibar(n).
    \end{align*}
    Now we see that each sum in the last line corresponds to exactly one of the cases in the previous lemma, so executing the character sums yields
    \[\sum_{k=1}^{q-1} e(\tfrac{kn}{q}) \sume_{\chi\bmod q} \chi(kn)\chibar(n)  = \frac{q-3}{2} \left( e(\tfrac{n}{q}) + e(-\tfrac{n}{q}) \right) - \sum_{k=2}^{q-2} e(\tfrac{kn}{q})\]
    and we note that
    \[\sum_{k=2}^{q-2} e(\tfrac{kn}{q}) = -\left( e(\tfrac{n}{q}) + e(-\tfrac{n}{q}) \right) + \sum_{k=1}^{q-1} e(\tfrac{kn}{q}) = -\left( e(\tfrac{n}{q}) + e(-\tfrac{n}{q}) \right) - 1.\qedhere \]
\end{proof}

\begin{lemma}[Mellin inversion] \cite[Chapter III, Theorem 2]{rw} \label{mellinthm}
    Suppose that $f: (0, \infty) \to \mathbb{R}$ is continuous and the integral
    \begin{align*}
   \phi(w) = \int_0^\infty x^{w-1} f(x) dx
    \end{align*}
    is absolutely convergent for $a < \Re(w) < b$. Then
    \[f(x) = \frac{1}{2\pi i} \int_{(c)} x^{-w} \phi(w) dw\]
    for $c \in (a,b)$.
\end{lemma}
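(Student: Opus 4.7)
My plan is to reduce this to the classical Fourier inversion theorem on $\mathbb{R}$ via the standard logarithmic substitution $x = e^u$, which converts the multiplicative structure of the Mellin transform into the additive structure of the Fourier transform.

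First, I would fix $c \in (a,b)$ and parametrize the vertical line of integration by $w = c + it$ with $t \in \mathbb{R}$. Substituting $x = e^u$ in the definition of $\phi(w)$ gives
\[
\phi(c+it) = \int_{-\infty}^{\infty} e^{u(c+it)} f(e^u)\, du = \int_{-\infty}^{\infty} e^{iut}\bigl(e^{uc} f(e^u)\bigr) du,
\]
so up to a sign convention, $\phi(c+it)$ is the Fourier transform of the auxiliary function $g(u) := e^{uc} f(e^u)$. The assumption of absolute convergence of the Mellin integral on the strip $a < \Re(w) < b$, together with continuity of $f$, ensures $g \in L^1(\mathbb{R})$.

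Next I would apply the Fourier inversion formula to $g$, yielding
\[
g(u) = \frac{1}{2\pi} \int_{-\infty}^{\infty} e^{-iut}\, \phi(c+it)\, dt.
\]
Substituting back $u = \log x$ and changing variables by $w = c + it$ (so $dw = i\,dt$) then gives
\[
f(x) = e^{-cu} g(u) = \frac{1}{2\pi}\int_{-\infty}^{\infty} x^{-(c+it)} \phi(c+it)\, dt = \frac{1}{2\pi i}\int_{(c)} x^{-w} \phi(w)\, dw,
\]
which is the desired formula. The independence of the right-hand side from the particular choice of $c \in (a,b)$ serves as a consistency check and follows from the holomorphy of $\phi$ on the strip via Cauchy's theorem, provided one has sufficient vertical decay.

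The main obstacle is justifying pointwise Fourier inversion: the cleanest statement of Fourier inversion requires $\widehat{g} \in L^1(\mathbb{R})$, which is not automatic from the stated hypothesis. One remedy, presumably the route taken in \cite{rw}, is to mollify $g$ by convolving against a Fej\'er or Gaussian kernel, carry out the inversion on the mollified object, and then use the continuity of $f$ to pass to the pointwise limit. Alternatively, one can invoke the Mellin--Plancherel theorem to obtain $L^2$ equality and then upgrade to pointwise equality through a continuity and density argument.
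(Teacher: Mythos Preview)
The paper does not supply its own proof of this lemma: it is simply quoted from \cite[Chapter III, Theorem 2]{rw} without argument. Your sketch via the logarithmic substitution $x=e^u$ reducing to Fourier inversion is the standard textbook proof of Mellin inversion, and is almost certainly the route taken in the cited reference; your identification of the pointwise-inversion justification as the only delicate point is accurate.
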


\begin{lemma}[An integral representation] \label{mellinrep} Let $x > 0$. For $d \in (-1,0)$,  we have
\begin{align}
 \label{mellinrep-mainline}\cos(x) = \frac{1}{2\pi i}\int_{(d)}  \Gamma(w) \cos\left(\frac{\pi w}{2}\right) x^{-w} dw + 1.
 \end{align}
    Further, this representation is absolutely convergent if $d \in (-1,-\half)$.
\end{lemma}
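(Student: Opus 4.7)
The plan is to apply Mellin inversion (Lemma \ref{mellinthm}) not to $\cos(x)$ itself but to the auxiliary function $f(x) = \cos(x) - 1$, which has extra vanishing at the origin so that its Mellin transform is absolutely convergent in a genuine vertical strip. First I would check that
\[
\phi(w) := \int_0^\infty x^{w-1}\bigl(\cos(x) - 1\bigr)\,dx
\]
converges absolutely for $-2 < \Re(w) < 0$: near $x=0$ we have $|\cos(x)-1| \ll x^2$, while near $x=\infty$ we have $|\cos(x)-1| \le 2$, so the integrand is dominated by $x^{\Re(w)+1}$ on $(0,1]$ and by $2x^{\Re(w)-1}$ on $[1,\infty)$, giving integrability precisely in the stated strip.

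Next I would evaluate $\phi(w) = \Gamma(w)\cos(\pi w/2)$. In the sub-strip $-2 < \Re(w) < -1$, integration by parts with $u = \cos(x)-1$, $dv = x^{w-1}dx$ kills both boundary terms --- at $0$ because $x^w(\cos(x)-1) = O(x^{\Re(w)+2})$, and at $\infty$ because $x^w \to 0$ --- yielding
\[
\phi(w) = \frac{1}{w}\int_0^\infty x^w \sin(x)\,dx = \frac{\Gamma(w+1)\sin(\pi(w+1)/2)}{w} = \Gamma(w)\cos(\pi w/2),
\]
using the classical sine Mellin transform (which is absolutely convergent exactly in this sub-strip) and the identity $\sin(\pi(w+1)/2) = \cos(\pi w/2)$. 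Since both sides are analytic throughout $-2 < \Re(w) < 0$, the identity extends to the full strip by analytic continuation. Applying Lemma \ref{mellinthm} with $(a,b) = (-2,0)$ then yields, for any $d \in (-2,0)$ and hence in particular for any $d \in (-1,0)$,
\[
\cos(x) - 1 = \frac{1}{2\pi i}\int_{(d)} \Gamma(w)\cos(\pi w/2)\, x^{-w}\,dw,
\]
which rearranges to the claimed identity \eqref{mellinrep-mainline}.

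For the absolute convergence statement when $d \in (-1,-\tfrac12)$, I would invoke Stirling's formula on the line $\Re(w) = d$: writing $w = d+it$, one has $|\Gamma(d+it)| \ll_d |t|^{d-1/2}e^{-\pi|t|/2}$ and $|\cos(\pi(d+it)/2)| \ll e^{\pi|t|/2}$ as $|t| \to \infty$, so the integrand is bounded by $C(d,x)\,|t|^{d-1/2}$. This is integrable in $t$ precisely when $d - \tfrac12 < -1$, i.e.\ when $d < -\tfrac12$, matching the claim.

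The main obstacle is the evaluation of $\phi(w)$, because the naive identity $\int_0^\infty x^{w-1}\cos(x)\,dx = \Gamma(w)\cos(\pi w/2)$ fails to converge absolutely for any $w$, so one cannot just compute the Mellin transforms of $\cos(x)$ and $1$ separately and subtract. The integration-by-parts trick sidesteps this by transferring to the sine Mellin transform, which does converge absolutely in a genuine (if narrower) sub-strip, after which analytic continuation finishes the job.
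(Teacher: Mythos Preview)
Your proof is correct and takes a genuinely different route from the paper. The paper subtracts $e^{-x}$ rather than $1$: it works with $f(x)=\cos(x)-e^{-x}$, whose Mellin transform converges absolutely only in the narrower strip $-1<\Re(w)<0$, and which evaluates to $\Gamma(w)\bigl(\cos(\tfrac{\pi w}{2})-1\bigr)$ after extending to $0<\Re(w)<1$ via conditional convergence and quoting the classical $\cos$ and $e^{-x}$ Mellin pairs there. It then separately inverts $e^{-x}$ on $\Re(w)>0$, shifts that contour across the pole at $w=0$ (picking up the $+1$), and adds the two inversion formulae. Your choice $f(x)=\cos(x)-1$ buys the wider strip $-2<\Re(w)<0$ of absolute convergence from the start, lets you evaluate $\phi(w)=\Gamma(w)\cos(\tfrac{\pi w}{2})$ directly via a single integration by parts to the sine transform, and produces the $+1$ immediately from the subtraction --- no auxiliary inversion for $e^{-x}$ and no contour shift are needed. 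The paper's route has the minor advantage that it cites tabulated Mellin transforms rather than redoing the sine-transform computation, but your argument is shorter and more self-contained.
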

\begin{proof}
Let $f(x) = \cos(x) - e^{-x}$ and define
\[\tilde{f}(w) := \int_0^\infty x^{w-1} f(x) dx\]
for $-1<\Re(w)<1$. This is well defined because we have $f(x)= x + O(x^2)$ for $|x|\le \pi$ and $f(x)=\cos(x) + O(e^{-x})$ for $|x|> \pi$, so that $\tilde{f}(w)$ converges absolutely for $-1< \Re(w) <0$ and converges conditionally for $0\le \Re(w) <1$ (by integration by parts, as in \eqref{int-par}). We claim moreover that $\tilde{f}(w)$ is holomorphic for $-1<\Re(w)<1$. For $-1<\Re(w)<0$,  we can differentiate under the integral sign by absolute convergence. For $0\le \Re(w) <1$, we have that $\tilde{f}(w)$ equals the sum of a holomorphic function and $\int_\pi^\infty x^{w-1} \cos x \ dx$. But by integration by parts,
\begin{align}
\label{int-par} \int_\pi^\infty x^{w-1} \cos x \ dx = \int_\pi^\infty (w-1)x^{w-2} \sin x \ dx, 
\end{align}
which is absolutely convergent, and hence analytic. 

 For $0<\Re(w)<1$, we have by \cite[equations 17.43.1, 17.43.3]{gr} that
\[
\tilde{f}(w) = \int_0^\infty x^{w-1} \cos(x)  dx - \int_0^\infty x^{w-1} e^{-x}  dx = \Gamma(w)\cos\Big(\frac{\pi w}{2}\Big) - \Gamma(w).
\]
By analytic continuation, we get that \[\tilde{f}(w) = \Gamma(w)\cos\Big(\frac{\pi w}{2}\Big) - \Gamma(w)\] for $-1<\Re(w)<1$. Then by Lemma \ref{mellinthm}, we have
\begin{align}
\label{mel-inv-f} f(x) =\cos(x) - e^{-x} = \frac{1}{2\pi i} \int_{(d)}  \Gamma(w) \left(\cos\left(\frac{\pi w}{2}\right) - 1 \right) x^{-w} dw
\end{align}
for $d \in (-1,0)$. Also by Mellin inversion, since $\int_0^\infty e^{-x} x^{w-1} dx$ converges absolutely for $\Re(w) >0$, we have
\begin{align*}
  e^{-x} = \frac{1}{2\pi i} \int_{(d)}  \Gamma(w) x^{-w} dw
\end{align*}
for $d\in(0,1)$. We move the line of integration to the left, crossing a simple pole of $\Gamma(w)$ at $w=0$, to obtain
\begin{align}
\label{mel-inv-e} e^{-x} = \frac{1}{2\pi i} \int_{(d)}  \Gamma(w) x^{-w} dw + 1
\end{align}
for $d \in (-1,0)$. 
Adding together \eqref{mel-inv-f} and \eqref{mel-inv-e} gives \eqref{mellinrep-mainline}.

Finally, we have by Stirling's approximation (equation \eqref{gamma}) that 
\[
 \Gamma(d+it) \cos\left(\frac{\pi (d+it)}{2}\right)\ll |t|^{d-\half}. 
\]
Thus the integral in \eqref{mellinrep-mainline} converges absolutely for $d\in(-1,-\half)$.
\end{proof}

\begin{lemma}[A residue calculation] \label{residues}
    Define
    \[f(s,w,q) := \Gamma(w) \cos\left(\tfrac{\pi w}{2}\right)  \left(\frac{q}{2\pi}\right)^w\zeta^2(\thalfps+w)\] 
    for $s \in \mathbb{C}$ such that $\thalfms \not\in \mathbb{Z}$. Then $f$ has simple poles at $w = -2n$ for $n \in \mathbb{N}\cup \{0\}$, and a double pole at $w = \thalfms$. The residues at these poles are
    \[R_0(n,s,q):=\underset{w=-2n}{\mathrm{Res}} f(s,w,q) = \frac{(-1)^n}{(2n)!} \left(\frac{2\pi}{q}\right)^{2n} \zeta^2(\thalfps-2n)\]
    and
    \begin{align} 
      \nonumber &R_1(s,q):=\underset{w=\half-s}{\mathrm{Res}} f(s,w,q) \\
\label{r1res}       &=  \Gamma(\thalfms) \cos\left(\tfrac{\pi }{4}-\tfrac{\pi s }{2}\right)  \left(\tfrac{q}{2\pi}\right)^{\half-\frac{s}{2}} \Big( \tfrac{ \Gamma'(\halfms)}{ \Gamma(\halfms)} - \tfrac{\pi}{2} \tan\left(\tfrac{\pi }{4}-\tfrac{\pi s }{2}\right) + \log \left(\tfrac{q}{2\pi}\right) + 2\gamma \Big).
            \end{align}
             These residues are meromorphic functions of $s$ which are analytic at $s=0$.
\end{lemma}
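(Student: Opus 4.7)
My plan is to identify the poles in $w$, compute the two residues by direct calculation, and then verify analyticity at $s=0$. The decomposition $f(s,w,q) = g(w)\, \zeta^2(\thalfps+w)$, where $g(w) := \Gamma(w)\cos(\pi w/2)(q/(2\pi))^w$, cleanly separates the two sources of singularities.

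First I would locate the poles. The factor $\Gamma(w)$ has simple poles at the non-positive integers while $\cos(\pi w/2)$ vanishes at the odd integers, so $g(w)$ has simple poles precisely at $w=-2n$ for $n\in\mathbb{N}\cup\{0\}$ and is holomorphic elsewhere. The factor $\zeta^2(\thalfps+w)$ has its unique singularity at $w=\thalfms$, a double pole. The hypothesis $\thalfms\not\in\mathbb{Z}$ guarantees that this double pole is disjoint from the simple poles of $g$, so the listed poles are all of them.

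For the simple pole at $w=-2n$, since $\zeta^2(\thalfps+w)$ is holomorphic there, combining $\underset{w=-2n}{\mathrm{Res}}\,\Gamma(w) = 1/(2n)!$, $\cos(-\pi n)=(-1)^n$, and $(q/(2\pi))^{-2n}=(2\pi/q)^{2n}$ reads off $R_0(n,s,q)$ immediately. The double pole at $w_0=\thalfms$ is the main computation and the place where I expect the bookkeeping to be trickiest. Using $\zeta(u)=1/(u-1)+\gamma+O(u-1)$ near $u=1$, I obtain
\[
\zeta^2(\thalfps+w) = \frac{1}{(w-w_0)^2} + \frac{2\gamma}{w-w_0} + O(1),
\]
so multiplying by $g(w)=g(w_0)+g'(w_0)(w-w_0)+O((w-w_0)^2)$ and extracting the coefficient of $(w-w_0)^{-1}$ gives a residue of $g'(w_0)+2\gamma g(w_0) = g(w_0)\bigl[g'(w_0)/g(w_0)+2\gamma\bigr]$. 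The logarithmic derivative
\[
\frac{g'(w)}{g(w)} = \frac{\Gamma'(w)}{\Gamma(w)} - \frac{\pi}{2}\tan\Big(\frac{\pi w}{2}\Big) + \log\Big(\frac{q}{2\pi}\Big)
\]
evaluated at $w_0=\thalfms$ then reproduces the bracket in \eqref{r1res}.

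Finally, for meromorphicity and analyticity at $s=0$, the building blocks $\Gamma, \cos, \tan, \log, \Gamma'/\Gamma$, and $\zeta^2$ at the linear arguments appearing in the two residues are all meromorphic in $s$, so both $R_0(n,s,q)$ and $R_1(s,q)$ are meromorphic on $\mathbb{C}$. Inspecting the relevant factors at $s=0$ — namely $\Gamma(\thalf)$, $\cos(\pi/4)$, $\tan(\pi/4)$, $(\Gamma'/\Gamma)(\thalf)$, and $\zeta^2(\thalf-2n)$ for $n\ge 0$ — all are finite, so neither residue has a pole at $s=0$.
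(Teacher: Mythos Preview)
Your proposal is correct and follows essentially the same approach as the paper: separate $f$ as $g(w)\zeta^2(\thalfps+w)$, read off the residues at $w=-2n$ from $\underset{w=-2n}{\mathrm{Res}}\,\Gamma(w)=1/(2n)!$ together with $\cos(-\pi n)=(-1)^n$, and at the double pole use the Laurent expansion $\zeta^2(\thalfps+w)=(w-w_0)^{-2}+2\gamma(w-w_0)^{-1}+\cdots$ to get $g'(w_0)+2\gamma g(w_0)$. Your explicit use of the logarithmic derivative $g'/g$ to unpack $g'(w_0)$ is in fact slightly more detailed than the paper, which simply asserts that the calculation ``leads to'' the stated formula for $R_1(s,q)$.
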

\begin{proof}
    The assumption that $\thalfms \not\in \mathbb{Z}$ ensures that the poles under discussion do not coincide. Recall that $\Gamma(w)$ has simple poles at the non-positive integers, but $\cos(\tfrac{\pi w}{2})$ vanishes at the negative odd integers, so $\Gamma(w) \cos(\tfrac{\pi w}{2})$ only has simple poles at the non-positive even integers $w=-2n$. We have the Laurent expansion 
 \[\Gamma(w)=\frac{1}{(2n)!}(w+2n)^{-1}+\ldots\] about $w=-2n$, from which we get that 
\[
\underset{w=-2n}{\mathrm{Res}} g(w) \Gamma(w) = \frac{g(-2n)}{(2n)!}
\]
for any function $g(w)$ that is holomorphic in a neighborhood of $w=-2n$. This leads to the calculation of $R_0(n,s,q)$.

Recall that $\zeta^2(\thalfps+w)$ has a double pole at $w=\thalfms$, and $\cos(\frac{\pi(\half-s)}{2}) \neq 0$ as $\thalfms \not\in \mathbb{Z}$. We have the Laurent expansion 
\[
\zeta^2(\thalf+s+w)=(w-\thalf+s)^{-2}+2\gamma(w-\thalf+s)^{-1}+\ldots
\]
about $w=\half-s$, where $\gamma$ is Euler's constant. From this we have that if $g(w)$ is holomorphic in a neighborhood of $w=\half-s$, then 
\[
\underset{w=-2n}{\mathrm{Res}} g(w)\zeta^2(\thalfps+w) =g'(\thalf-s)+2\gamma g(\thalf-s). \]
This leads to the calculation of $R_1(s,q)$.
\end{proof}
\noindent For future reference, using the values $\Gamma(\half)=\pi^\half, \frac{\Gamma'(\half)}{\Gamma(\half)}=\log(\frac14)-\gamma, \cos(\frac{\pi}{4})=2^{-\half}, \tan(\frac{\pi}{4}) =1 $, we note that
\begin{align}
\label{record} R_1(0,q)=\frac{q^{\half}}{2} \Big( \log \left(\tfrac{q}{8\pi}\right)+  \gamma -\tfrac{\pi}{2} \Big).
\end{align}

\begin{lemma}[The convexity bound for the Riemann Zeta function] \cite[Chapter V]{tit} \label{growth}
Let $\sigma \in \mathbb{R}$ and $\epsilon>0$. For $t\in\mathbb{R}$ with $|t|\ge 1$, we have 
\begin{align}
\label{convexitybound} |\zeta(\sigma + it)| \ll |t|^{\alpha+\epsilon},
\end{align}
where $\alpha=0$ for $\sigma>1$, $\alpha=\half (1 - \sigma)$ for $\sigma \in [0,1]$, and $\alpha= \thalf - \sigma$ for $\sigma<0$. The implied constant depends on $\sigma$ and $\epsilon$.
    \end{lemma}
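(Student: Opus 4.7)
The plan is to treat the three ranges $\sigma>1$, $\sigma<0$, and $0\le \sigma\le 1$ separately, with the last handled by interpolation from the first two via the Phragm\'en--Lindel\"of convexity principle.

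In the right region $\sigma>1$, the bound is immediate: since the Dirichlet series $\zeta(\sigma+it)=\sum_{n\ge 1} n^{-(\sigma+it)}$ converges absolutely, $|\zeta(\sigma+it)|\le \zeta(\sigma)$, a constant depending only on $\sigma$. This matches the stated $\alpha=0$.

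In the left region $\sigma<0$, I would feed the functional equation \eqref{fe-2} into Stirling's approximation $|\Gamma(a+ib)|\sim \sqrt{2\pi}\,|b|^{a-\half}e^{-\pi|b|/2}$ for fixed real $a$ as $|b|\to\infty$. The exponential factors in the numerator $\Gamma(\tfrac{1-s}{2})$ and the denominator $\Gamma(\tfrac{s}{2})$ cancel, leaving the ratio of size $|t|^{\half-\sigma}$ up to constants depending on $\sigma$. Since $1-\sigma>1$, the remaining factor $\zeta(1-s)$ is $O(1)$ by absolute convergence. Combining these yields $|\zeta(\sigma+it)|\ll |t|^{\half-\sigma}$, as claimed.

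In the middle region $0\le \sigma\le 1$, I would apply Phragm\'en--Lindel\"of to the entire function $g(s):=(s-1)\zeta(s)$, which is of finite order in vertical strips (a classical fact provable, for instance, from Euler--Maclaurin summation). On the lines $\Re(s)=1+\epsilon$ and $\Re(s)=-\epsilon$, the previous two paragraphs give $g(s)\ll |t|$ and $g(s)\ll |t|^{\frac{3}{2}+\epsilon}$ respectively. The convexity principle then bounds the exponent of growth in $|t|$ on the line $\Re(s)=\sigma$ by the linear interpolation of the two boundary exponents; dividing out the factor $(s-1)$ recovers the stated $\alpha=\half(1-\sigma)$ (with any $\epsilon>0$ loss absorbed into the $\epsilon$ of the statement). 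The only step that requires real care is the verification that $g$ is of finite order in the strip, which I would take from Titchmarsh \cite{tit} rather than reprove from scratch; everything else is a routine application of Stirling and absolute convergence.
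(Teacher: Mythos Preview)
Your proof plan is correct and is the standard textbook argument for the convexity bound. However, the paper does not actually prove this lemma: it is stated with a citation to Titchmarsh \cite[Chapter V]{tit} and no proof is given. The paper treats it as a known input, adding only the remark that when the implied constant must be controlled as $\sigma$ varies (for bounding horizontal integrals during contour shifts), one can appeal to \cite[Theorem 4]{rad}. So there is nothing to compare against; your sketch simply supplies the proof that the paper outsources to the literature.
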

    \noindent The result above will be used when bounding vertical integrals on fixed lines. But when these lines are shifted left or right in order to apply Cauchy's residue theorem, the argument will implicitly involve integrals over horizontal lines. Bounding such horizontal integrals will require some control over the implicit constant in \eqref{convexitybound}, and this may be found in \cite[Theorem 4]{rad}.

    \begin{lemma}[Stirling's expansion]\label{stirling}
Let $\mathcal{C}\subset \{z: \Re(z)>0\} $ be a fixed compact set. For $z\in\mathcal{C}$ and $t\in\mathbb{R}$ with $|t|>\half$, we have 
\begin{align}
\label{gamcos} \Gamma(z+it)\cos(\tfrac{\pi}{2}(z+it)) =  |t|^{(z-\half)} \exp\Big( it \log|t| -it  \Big)\Big(\sum_{m=0}^M a_m(z) t^{-m} + E_M(z,t) \Big)
\end{align}
for any $M\ge 0$, where $a_m(z)$ and $E_M(z,t)$ are holomorphic functions of $z\in\mathcal{C}$ that depend on 
$\mathrm{sgn}(t)$ such that $a_m(z)$ is a polynomial in $z$ of degree at most $2m$, 
\begin{align}
\label{a0val} a_0(z) = (\tfrac{\pi}{2})^\half e^{-i \frac{\pi}{4}\mathrm{sgn}(t) },
\end{align}
\and
\begin{align}
\label{amprop} E_M(z,t) \ll_{M} |t|^{-M-1}.
\end{align}
\end{lemma}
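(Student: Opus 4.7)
The plan is to combine the classical Stirling expansion of $\Gamma(z+it)$ with the exponential form of $\cos(\tfrac{\pi}{2}(z+it))$, and to exploit the cancellation of the factors $e^{\pm\pi|t|/2}$ produced by the two pieces. Write $\epsilon := \mathrm{sgn}(t)$ throughout and set $w := z+it$. I start from
\[
\Gamma(w) = \sqrt{2\pi}\, w^{w-1/2} e^{-w}\Big(1 + \sum_{m=1}^{M} \frac{b_m}{w^m} + O(|w|^{-M-1})\Big),
\]
valid uniformly on sectors $|\arg w|\le \pi/2 - \eta$. Since $z\in\mathcal{C}$ has $\Re z>0$ bounded and $|t|>\tfrac{1}{2}$, the argument $w$ lies safely in such a sector. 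I factor $w = it(1+z/(it))$ and expand the principal logarithm as
\[
\log(z+it) = \log|t| + i\epsilon\tfrac{\pi}{2} + \sum_{k\ge 1}\frac{(-1)^{k+1}}{k}\Big(\frac{z}{it}\Big)^k.
\]
Substituting into $(w-\tfrac{1}{2})\log w - w$ and grouping by powers of $(it)^{-1}$ yields
\[
w^{w-1/2} e^{-w} = |t|^{z-1/2}\, e^{it\log|t| - it}\, e^{-\pi|t|/2}\, e^{i\epsilon\pi(z-1/2)/2}\, B(z,t),
\]
where $B(z,t) = \exp\bigl(\sum_{k\ge 1} c_k(z)(it)^{-k}\bigr) = 1 + B_1(z)/(it) + B_2(z)/(it)^2 + \cdots$ with $c_k(z)$ a polynomial in $z$ of degree $k+1$. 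Since a partition of $n$ into $r$ parts contributes a monomial of degree at most $n+r\le 2n$ to $B_n$, each $B_n(z)$ is a polynomial of degree at most $2n$; this bound persists after absorbing the Stirling residual $\sum b_m/w^m$, whose own expansion in $z/(it)$ only contributes lower degree polynomial corrections.

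For the cosine, splitting into exponentials and keeping only the dominant one gives
\[
\cos\bigl(\tfrac{\pi}{2}(z+it)\bigr) = \tfrac{1}{2} e^{-i\epsilon\pi z/2}\, e^{\pi|t|/2}\bigl(1 + O(e^{-\pi|t|})\bigr),
\]
uniformly for $z\in\mathcal{C}$. Multiplying this into the expression for $\Gamma(z+it)$ produces the two essential simplifications $e^{-\pi|t|/2}\cdot e^{\pi|t|/2} = 1$ and $e^{i\epsilon\pi(z-1/2)/2}\cdot e^{-i\epsilon\pi z/2} = e^{-i\epsilon\pi/4}$. Together with the prefactor $\sqrt{2\pi}/2 = (\pi/2)^{1/2}$, one reads off the leading coefficient $a_0(z) = (\pi/2)^{1/2} e^{-i\epsilon\pi/4}$ of \eqref{a0val}. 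Converting $(it)^{-m}$ to $t^{-m}$ introduces only a factor $i^{-m}$, so the higher $a_m(z)$ inherit the polynomial degree bound $\deg a_m \le 2m$. Holomorphy of each $a_m$ in $z\in\mathcal{C}$ is automatic: the principal logarithm is analytic on the relevant domain, and every Taylor series involved converges uniformly for $z\in\mathcal{C}$.

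The only real obstacle is the routine bookkeeping of the error $E_M(z,t)$, which receives three contributions: the Stirling truncation $O(|w|^{-M-1}) = O(|t|^{-M-1})$ uniform on $\mathcal{C}$; the truncation of $\exp\bigl(\sum c_k(z)(it)^{-k}\bigr)$ at order $M$, whose remainder is an explicit contour integral and hence holomorphic in $z$ of the same magnitude; and the subdominant cosine exponential of size $O(e^{-\pi|t|})$, which is easily absorbed. Summing these yields $E_M(z,t) \ll_M |t|^{-M-1}$ holomorphically on $\mathcal{C}$, as required.
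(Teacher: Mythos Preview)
Your proof is correct and follows essentially the same approach as the paper: expand $\log(z+it)$ about $it$, combine with the classical Stirling series, exponentiate, and then multiply by the dominant exponential in $\cos(\tfrac{\pi}{2}(z+it))$ to cancel the $e^{\pm\pi|t|/2}$ factors and read off $a_0(z)=(\pi/2)^{1/2}e^{-i\epsilon\pi/4}$. The only cosmetic difference is that you start from the multiplicative form of Stirling's formula while the paper starts from the logarithmic form \cite[equation 8.344]{gr}; your partition argument for $\deg a_m\le 2m$ is in fact more explicit than what the paper records.
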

\proof
We work with the principal branch of the complex logarithm. Throughout the proof, $M$ will denote a natural number, $a_m(z)$ and $E_M(z,t)$ will denote functions as described in the lemma, but all of these quantities may not be the same from one occurrence to the next, and $a_m$ will denote some constants. By \cite[equation 8.344]{gr}, we have
\begin{align}
\label{quote-stirling} \log \Gamma(z+it)=(z-\thalf+it)\log(z+it)-(z+it) + \thalf \log 2\pi+\sum_{r=1}^{R} \frac{a_r}{(z+it)^r} + E_R(z,t).
\end{align}
Although \cite[equation 8.344]{gr} just gives a bound for $E_R(z,t)$, we can infer that it is holomorphic for $z\in\mathcal{C}$ since every other term in \eqref{quote-stirling} is. Using Taylor's theorem, we write
\begin{align*}
 \log (z+it) = \log(it)+\log \Big(1+\frac{z}{it}\Big) = \log|t| +i \frac{\pi}{2} \mathrm{sgn}(t) + \sum_{m=1}^M \frac{(-1)^{m+1}}{m}\Big(\frac{z}{it}\Big)^m + E_M(z,t)
\end{align*}
and 
\[
\frac{a_r}{(z+it)^r}=\frac{a_r}{(it)^r} \frac{1}{(1+\frac{z}{it})^r}= \frac{a_r}{(it)^r}\bigg( 1+ \sum_{m=1}^M
\mybinom{-r}{m}\Big(\frac{z}{it}\Big)^m+ E_M(z,t) \bigg).
\]
Plugging back into \eqref{quote-stirling}, the $-z$ term cancels with $it \cdot \frac{z}{it}$, and we get 
\begin{align*}
 \log \Gamma(z+it)=(z-\thalf+it) \log |t| - \tfrac{\pi}{2} |t| -it   + i(z-\thalf) \tfrac{\pi}{2} \mathrm{sgn}(t) +\thalf\log 2\pi+ \sum_{m=1}^M a_m(z) t^{-m} + E_M(z,t).
\end{align*}
Taking the exponential of both sides we get
\begin{align*}
&\Gamma(z+it)\\
&=\sqrt{2\pi} e^{-i \frac{\pi}{4} \mathrm{sgn}(t) } |t|^{(z-\half)} \exp\Big( -\tfrac{\pi}{2}|t|   +it\log|t| -it + iz \tfrac{\pi}{2} \mathrm{sgn}(t) \Big) \prod_{m=1}^M \exp(a_m(z) t^{-m} ) \cdot \exp(E_M(z,t)).
\end{align*}
Next we take a Taylor polynomial for $\exp(x)$ to get 
\begin{align}
\label{gamma} &\Gamma(z+it) \\
\nonumber &=\sqrt{2\pi} e^{-i \frac{\pi}{4} \mathrm{sgn}(t) } |t|^{(z-\half)} \exp\Big( -\tfrac{\pi}{2}|t|   +it\log|t| -it + iz \tfrac{\pi}{2} \mathrm{sgn}(t) \Big)\Big(1+\sum_{m=1}^M a_m(z) t^{-m} + E_M(z,t) \Big).
\end{align}
To obtain \eqref{gamcos}, we write
\begin{align}
\label{cos} \cos(\tfrac{\pi}{2}(z+it)) &= \thalf\big( e^{i(\frac{\pi}{2}(z+it))}+e^{-i(\frac{\pi}{2}(z+it))}\big)=\thalf e^{-i  z\tfrac{\pi}{2} \mathrm{sgn}(t) + \tfrac{\pi}{2} |t|} \big(1 + e^{iz \pi   \mathrm{sgn}(t)  -\pi |t|} \big)
\end{align}
and then multiply with \eqref{gamma}. 
\endproof


\section{Obtaining the leading main term in Theorem \ref{result}}

We focus on \eqref{resulteqn}. Fix any constant $c \in \thalf + \mathbb{Z}$ satisfying $c > 10$. This parameter determines the length of our asymptotic expansion: we will see that the larger $c$ is, the more terms we may take on the right hand side of \eqref{resulteqn}. Define the sets
\begin{align*}
&\mathcal{S}_1 := \{s \in \mathbb{C} \,:\, -1 < \Im(s) < 1, \ c-\tfrac32 < \Re(s) < c-\thalf\},\\
&\mathcal{S}_2 := \{s \in \mathbb{C} \,:\, -1 < \Im(s) < 1, \ -\tfrac{1}{10} < \Re(s)< c-\thalf\}.
\end{align*}

Define for $s \in \mathbb{C}$ and an odd prime $q$,
\begin{align*}
    F(s,q) := \sume_{\chi\bmod q} L(\thalfms, \chi) L(\thalfps, \chibar)
\end{align*}
where the sum is taken over the even primitive Dirichlet characters of modulus $q$. Let $H(s,q)$ denote the restriction of $F(s,q)$ to the set $\mathcal{S}_1$.
By the functional equation given in Lemma \ref{dl-funceqn}, we have that
\begin{align*}
    H(s,q) = \frac{\Gamma\left(\tfrac{1}{4}+\tfrac{s}{2}\right)}{\Gamma\left(\tfrac{1}{4}-\tfrac{s}{2}\right) \pi^s q^{\half-s}} \sume_{\chi\bmod q} \tau(\chi)   L(\thalfps,\chibar)^2 := \frac{\Gamma\left(\tfrac{1}{4}+\tfrac{s}{2}\right)}{\Gamma\left(\tfrac{1}{4}-\tfrac{s}{2}\right) \pi^s q^{\half-s}} H_1(s,q).
\end{align*}
Now, since in $\mathcal{S}_1$ we have $\Re(s) > \thalf$, the Dirichlet series representation of $L(\thalfps, \chibar)$ is absolutely convergent, and hence it follows that
\begin{align*}
    H_1(s,q) =  \sume_{\chi\bmod q}  \ \sum_{n,m \geq 1} \frac{\tau(\chi) \chibar(nm)}{(nm)^{\half+s}} = \sum_{\substack{n\ge 1\\(n,q)=1}}  \ \sume_{\chi\bmod q} \frac{\tau(\chi) d(n) \chibar(n) }{n^{\half+s}} .
\end{align*}
The last equality follows by exchanging the order of summation, using the total multiplicativity of the Dirichlet characters, and using the fact that $\chi(n)=0$ if $q|n$. We apply Corollary \ref{orthog-euler} to get
\begin{align*}
    H_1(s,q) =  \sum_{\substack{n \geq 1\\ (n,q)=1}}   \frac{((q-1) \cos(\tfrac{2 \pi n}{q}) + 1)d(n)}{n^{\half+s}}.
\end{align*}
The sum may be extended to all natural numbers as follows:
\begin{align*}
H_1(s,q) 
&=  \sum_{\substack{n \geq 1}}   \frac{((q-1) \cos(\tfrac{2 \pi n}{q}) + 1)d(n)}{n^{\half+s}}- \sum_{\substack{n \geq 1\\ q|n}}   \frac{((q-1) \cos(\tfrac{2 \pi n}{q}) + 1)d(n)}{n^{\half+s}}\\
&=  \sum_{\substack{n \geq 1}}   \frac{((q-1) \cos(\tfrac{2 \pi n}{q}) + 1)d(n)}{n^{\half+s}}- q\sum_{\substack{m \geq 1}}   \frac{ d(mq)}{(mq)^{\half+s}},
   \end{align*}
where we wrote $n=qm$ to get the last expression. By the Hecke multiplicative property of the divisor function (which is also satisfied by $\lambda_f(n)$),
\[
d(mq)=
\begin{cases}
d(m)d(q)  &\text{ if } (m,q)=1,\\
d(m)d(q)-d(\frac{m}{q})  &\text{ if } q|m,
\end{cases}
\]
we get
\begin{align}
\label{passtoh2}  H_1(s,q) &= (q-1) \sum_{\substack{n \geq 1}}   \frac{ \cos(\tfrac{2 \pi n}{q}) d(n)}{n^{\half+s}} + (1 + q^{-2s} - d(q) q^{\half-s}) \zeta^2(\thalfps)\\
\nonumber &:= (q-1)H_2(s,q)+ (1 + q^{-2s} - 2q^{\half-s}) \zeta^2(\thalfps),
\end{align}
where we used $d(q)=2$ in the last line. Note that for Theorem \ref{result2}, we get $\lambda_f(q)$ instead of  $d(q)$.

Inserting the integral representation of $\cos(\tfrac{2\pi n}{q})$ given in Lemma \ref{mellinrep}, we have
\[
H_2(s,q) = \sum_{n\ge 1} \frac{d(n)}{n^{\half+s}} \frac{1}{2\pi i} \int_{(-\frac{3}{4})} \Gamma(w) \cos\left(\tfrac{\pi w}{2}\right)  \left(\frac{q}{2\pi n}\right)^w dw + \zeta^2(\thalf+s).
\]
The integral converges absolutely:
\begin{equation}
   \int_{(-\frac34)}  \left| \Gamma(w) \cos\left(\frac{\pi w}{2}\right) \frac{q^w}{(2\pi n)^w} dw\right| \ll \frac{n^{\frac34}}{q^{\frac34}},
\end{equation}
and $\Re(\thalf + s - \tfrac{3}{4}) > 1$ for $s\in\mathcal{S}_1$, so we can interchange summation and integration to get
\begin{align}
 \label{h2interchange}   H_2(s,q) &= \frac{1}{2\pi i} \int_{(-\frac{3}{4})} \Gamma(w) \cos\left(\tfrac{\pi w}{2}\right)  \left(\frac{q}{2\pi}\right)^w\zeta^2(\thalfps+w) dw + \zeta^2(\thalfps) \\
  \nonumber  &:= H_3(s,q) + \zeta^2(\thalfps).
\end{align}
Moving the line of integration to $\Re(w)=-c$, we pick up the residues calculated in Lemma \ref{residues}, and get
\[ H_3(s,q)=  R_1(s,q) + \sum_{n=1}^{\lfloor c/2 \rfloor} R_0(n,s,q) + I(s,q),\]
where $\lfloor x\rfloor$ denotes the greatest integer less than or equal to $x$, and 
\[I(s,q) = \frac{1}{2\pi i} \int_{(-c)} \Gamma(w) \cos\left(\tfrac{\pi w}{2}\right)  \left(\frac{q}{2\pi}\right)^w\zeta^2(\thalfps+w) dw.\]
Note that this integral converges absolutely, since by Lemma \ref{growth} we have
\[|\Gamma(w)\cos\left(\tfrac{\pi w}{2}\right)\zeta^2(\thalfps+w)| \ll |t|^{-c-\half} |t|^{1-2\Re(\half+s+w)+\epsilon}  \ll |t|^{3} |t|^{-c-\half} \ll |t|^{-7}\]
for $w = -c + it$ with $|t| \to \infty$.

Retracing our steps and inserting the final expressions for $H_1(s,q),H_2(s,q),H_3(s,q)$ back into $H(s,q)$, we obtain the formula
\begin{multline}
\label{Hresult} H(s,q)=\frac{\Gamma(\tfrac{1}{4}+\tfrac{s}{2})}{\Gamma(\tfrac{1}{4}-\tfrac{s}{2}) \pi^s q^{\half-s}} \bigg((q-1)R_1(s,q) + (q-1)\sum_{n=1}^{\lfloor c/2 \rfloor} R_0(n,s,q) \\ + (q-1)I(s,q) + (q + q^{-2s} - 2q^{\half-s}) \zeta^2(\thalfps)\bigg).
\end{multline}
From this expression, it follows that 
\[H(s,q)-\frac{\Gamma\left(\tfrac{1}{4}+\tfrac{s}{2}\right)(q-1)}{\Gamma\left(\tfrac{1}{4}-\tfrac{s}{2}\right) \pi^s q^{\half-s}}I(s,q),\]
continues to a meromorphic function on $\mathcal{S}_2$, with value at $s=0$ equal to
\begin{align}
\label{eval-at-0} \frac{q-1}{2} \left( \gamma +\log(\tfrac{q}{8\pi}) - \tfrac{\pi}{2} \right) + \frac{(q-1)}{q^{\frac{1}{2}}}\sum_{n=1}^{\lfloor c/2 \rfloor} \frac{(-1)^n}{(2n)!}\Big(\frac{2\pi}{q}\Big)^{2n} \zeta^2(\thalf-2n)   +\frac{(q +1- 2q^{\frac{1}{2}})}{q^{\frac{1}{2}}} \zeta^2(\thalf),
\end{align}
using \eqref{record}. We see here some of the main terms claimed in equation \eqref{resulteqn} of Theorem \ref{result}: namely, the leading main term $ \frac{q-1}{2} ( \log(\frac{q}{8\pi})+\gamma - \frac{\pi}{2} )$, half of the second main term $2  \zeta^2(\half) q^{\half}$, and  the third main term $-2\zeta^2(\half)$, while the rest of the right hand side of \eqref{eval-at-0} is $O(q^{-\half})$ and contributes to the lower order main terms in Theorem \ref{result}. However, it still remains to study the shifted integral $I(s,q)$, and this is the purpose of the next section. 

We will show in the next section that as a function of $s$, 
\[
\frac{\Gamma(\frac{1}{4}+\frac{s}{2})(q-1)}{\Gamma(\frac{1}{4}-\frac{s}{2}) \pi^s q^{\half-s}}I(s,q)
\]
continues to a meromorphic function on $\mathcal{S}_2$, with value at $s=0$ equal to  $\zeta^2(\half) q^{\half}$ plus addition lower order main terms of size $O(q^{-\half})$. This recovers the other half of the second main term $2  \zeta^2(\half) q^{\half}$ in \eqref{resulteqn}.

 To frame this in terms of the strategy laid out in the sketch, we let $\mathcal{I}(s,q)$ denote meromorphic continuation of $I(s,q)$ to $\mathcal{S}_2$. Then 
\begin{multline*}
G(s,q):=\frac{\Gamma(\tfrac{1}{4}+\tfrac{s}{2})}{\Gamma(\tfrac{1}{4}-\tfrac{s}{2}) \pi^s q^{\half-s}} \bigg((q-1)R_1(s,q) + (q-1)\sum_{n=1}^{\lfloor c/2 \rfloor} R_0(n,s,q) \\ + (q-1)\mathcal{I}(s,q) + (q + q^{-2s} - 2q^{\half-s}) \zeta^2(\thalfps)\bigg)
\end{multline*}
is meromorphic for $s\in\mathcal{S}_2$, with $G(s,q)=H(s,q)$ on $\mathcal{S}_1$. Since also $F(s,q)=H(s,q)$ on $\mathcal{S}_1$, we get that $F(s,q)=G(s,q)$ on $\mathcal{S}_2$ by the principle of analytic continuation.

\

{\it The sum over the odd characters.} Before moving on we note that deriving \eqref{resulteqn-odd} would entail similar calculations, except for a couple of important differences as follows. Our main object of study would be
\[
 (q-1) \sum_{\substack{n \geq 1}}   \frac{ \sin(\tfrac{2 \pi n}{q}) d(n)}{n^{\half+s}}.
 \]
Initially this sum would be restricted to $(n,q)=1$ but would extend to all $n\ge 1$ for free since $\sin(\frac{2 \pi n}{q})=0$ for $q|n$. Thus there would be no additional terms as there are in \eqref{passtoh2}. We would use the Mellin inverse transform
\begin{align}
\label{inv-mel-sin} \sin(x) = \frac{1}{2\pi i}\int_{(d)}  \Gamma(w) \sin\left(\frac{\pi w}{2}\right) x^{-w} dw
\end{align}
for $d\in(-1,0)$ instead of \eqref{mellinrep-mainline}. Note that unlike \eqref{mellinrep-mainline}, there is no additional $+1$ term here (since $\Gamma(w)\sin(\frac{\pi w}{2})$ has no pole at $w=0$). Thus we would need to study the integral
\[
\frac{1}{2\pi i}  \int_{(-\frac34)} \Gamma(w) \sin\left(\tfrac{\pi w}{2}\right)  \left(\frac{q}{2\pi}\right)^w\zeta^2(\thalfps+w) dw
 \]
instead of the integral in \eqref{h2interchange}, with no additional terms (thus no main terms appear so far).  Moving the line of integration left to $\Re(w)=-c$ crosses poles at the negative odd integers and at $\half-s$. The residue at $w=\half-s$ is
\[
\Gamma(\thalfms) \sin\left(\tfrac{\pi }{4}-\tfrac{\pi s }{2}\right)  \left(\tfrac{q}{2\pi}\right)^{\half-\frac{s}{2}} \Big( \tfrac{ \Gamma'(\halfms)}{ \Gamma(\halfms)} + \tfrac{\pi}{2} \cot \left(\tfrac{\pi }{4}-\tfrac{\pi s }{2}\right) + \log \left(\tfrac{q}{2\pi}\right) + 2\gamma \Big),
\]
instead of \eqref{r1res}. Evaluating at $s=0$ yields the leading main term in \eqref{resulteqn-odd} and explains the $+\frac{\pi}{2}$ term there instead of $-\frac{\pi}{2}$ as in \eqref{resulteqn}. The residues at the negative odd integers contribute lower main terms of size $O(q^{-\half})$ to  \eqref{resulteqn-odd}. The shifted integral positioned at $\Re(w)=-c$ will be discussed at the end of the next section.

\section{The shifted integral}

The goal of this section is to show that
\begin{lemma} \label{final}
There exists a meromorphic function $\mathcal{I}(s,q)$ defined for $s\in \mathcal{S}_2$ such that $\mathcal{I}(s,q)= I(s,q)$ for $s\in \mathcal{S}_1$. At $s=0$, we have
\begin{align}
\label{finallem} \mathcal{I}(0,q)=  \zeta^2(\thalf)+ \sum_{n=1 }^{N} r_n q^{-n}+O_N(q^{-N-1}).
\end{align}
for any integer $N\ge 1$ and some constants $r_n$.
\end{lemma}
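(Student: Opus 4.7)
The plan is to apply the functional equation \eqref{fe-2} to $\zeta^2(\thalfps+w)$ inside $I(s,q)$, expand the resulting $\zeta^2(\thalf-s-w)$ as its absolutely convergent Dirichlet series, interchange summation and integration, and then analyze each inner integral by Stirling's formula and stationary phase. The desired asymptotic will then follow by reassembling the $n$-sum into shifted values of $\zeta^2$ and invoking meromorphic continuation from $\mathcal{S}_1$ to $\mathcal{S}_2$.

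For the setup, on $\Re(w)=-c$ and $s$ near $0$ one has $\Re(\thalf-s-w)>1$, so \eqref{fe-2} gives
\[
\zeta^2(\thalfps+w)=\pi^{2s+2w}\,\frac{\Gamma^2(\tfrac14-\tfrac{s+w}{2})}{\Gamma^2(\tfrac14+\tfrac{s+w}{2})}\sum_{n\ge 1}\frac{d(n)}{n^{\half-s-w}}.
\]
Absolute bounds via Lemma \ref{stirling} and Lemma \ref{growth}, after using the reflection formula of Lemma \ref{dl-funceqn} to handle the denominator Gammas (whose arguments have negative real part), justify interchanging the sum and the integral, yielding
\[
I(s,q)=\sum_{n\ge 1}\frac{d(n)}{n^{\half-s}}\,J_n(s,q),\qquad J_n(s,q):=\frac{1}{2\pi i}\int_{(-c)}\!\Phi(s,w)\!\left(\frac{nq}{2\pi}\right)^{\!w}\!dw,
\]
where $\Phi(s,w):=\pi^{2s+2w}\,\Gamma(w)\cos(\tfrac{\pi w}{2})\,\Gamma^2(\tfrac14-\tfrac{s+w}{2})/\Gamma^2(\tfrac14+\tfrac{s+w}{2})$.

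The heart of the argument is an asymptotic expansion of $J_n(s,q)$ as $nq\to\infty$. Writing $w=-c+it$ and applying Lemma \ref{stirling} to each of the five $\Gamma$/cosine factors (after using the reflection and duplication identities of Lemma \ref{dl-funceqn} to place all Gamma arguments in a right half-plane) puts the integrand in amplitude--phase form, in which the phase is $\phi_n(t)=-t\log(|t|/(2\pi enq))$ exactly as recorded in \eqref{osc-factor} and the amplitude admits a full asymptotic expansion in inverse powers of $|t|$ with coefficients polynomial in $s$. The stationary points are $t_\pm=\pm 2\pi nq$ with $\phi_n''(t_\pm)=-1/t_\pm$, and crucially $\phi_n(t_\pm)=\pm 2\pi nq\in 2\pi\mathbb{Z}$, so $e^{i\phi_n(t_\pm)}=1$. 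A standard saddle-point expansion then yields, for any $N\ge 1$,
\[
J_n(s,q)=\sum_{k=0}^{N}\frac{b_k(s)}{(nq)^{2s+k}}+E_N(s,n,q),
\]
with $b_k(s)$ holomorphic near $s=0$, $b_0(0)=1$, and $E_N(s,n,q)\ll_N (nq)^{-2\Re(s)-N-1}$ uniformly in $n\ge 1$.

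Summing over $n$ and using $\sum_{n\ge 1}d(n)n^{-(\thalfps+k)}=\zeta^2(\thalfps+k)$ on $\mathcal{S}_1$ gives
\[
I(s,q)=\sum_{k=0}^{N}b_k(s)\,q^{-2s-k}\,\zeta^2(\thalfps+k)+\widetilde E_N(s,q),\qquad \widetilde E_N(s,q)\ll_N q^{-2\Re(s)-N-1}.
\]
Each summand on the right extends meromorphically to $\mathcal{S}_2$: only $\zeta^2(\thalfps)$ contributes a pole inside $\mathcal{S}_2$ (a double pole at $s=\thalf$), while the shifted factors $\zeta^2(\thalfps+k)$ for $k\ge 1$ have their poles at $s=\thalf-k\le -\thalf$, which lie outside $\mathcal{S}_2$. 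A parallel treatment of the error term shows that $\widetilde E_N$ also extends analytically to $\mathcal{S}_2$. Defining $\mathcal{I}(s,q)$ to be the resulting meromorphic continuation and specializing to $s=0$ produces $b_0(0)\zeta^2(\thalf)=\zeta^2(\thalf)$ from $k=0$, the lower-order contributions $r_k q^{-k}:=b_k(0)\zeta^2(\thalf+k)q^{-k}$ from $k\ge 1$, and the error $\widetilde E_N(0,q)=O_N(q^{-N-1})$, giving \eqref{finallem}. The principal technical obstacle is the uniform-in-$n$ stationary phase step: the five special-function factors must first be rearranged into a single clean amplitude--phase product via the identities recalled in Lemma \ref{dl-funceqn}, after which the saddle-point analysis must be carried out with implicit constants independent of $n$ (split the $t$-line into neighborhoods of $t_\pm$ and the complement, handling the complement by repeated integration by parts exploiting $|\phi_n'(t)|=|\log(2\pi nq/|t|)|$), and the leading constant tracked carefully enough to verify $b_0(0)=1$. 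Only with the coincidence $e^{i\phi_n(t_\pm)}=1$, special to the second moment, does the leading-order contribution at the two saddles collapse to the clean factor $(nq)^{-2s}$ that recovers $\zeta^2(\thalf)$ at $s=0$.
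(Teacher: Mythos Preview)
Your proposal is correct and follows essentially the same route as the paper: apply the functional equation to $\zeta^2(\thalfps+w)$, expand $\zeta^2(\thalf-s-w)$ as a Dirichlet series, interchange, and carry out a uniform-in-$n$ stationary phase analysis at $t_\pm=\pm 2\pi nq$ (exploiting $e^{i\phi_n(t_\pm)}=1$) to recover shifted values $\zeta^2(\thalfps+k)$ whose meromorphic continuation furnishes $\mathcal{I}(s,q)$ on $\mathcal{S}_2$. The paper's proof differs only in that it spells out the ``principal technical obstacle'' you flag---tracking holomorphy in $s$ through every remainder term of the Stirling and saddle-point expansions via an explicit dyadic partition of unity, repeated integration by parts, and a Fourier-inversion treatment near the stationary point---so that each error piece is visibly an analytic function on $\mathcal{S}_2$ with the required bound, rather than merely an $O$-estimate on $\mathcal{S}_1$.
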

We now embark on the proof of Lemma \ref{final}. Let $s\in\mathcal{S}_1$. Applying the functional equation \eqref{fe-2} to $\zeta^2(\half+s+w)$ and the reflection formula to $\Gamma(w)$, we get
\begin{align*}
&I(s,q) = \frac{1}{2\pi i}  \int\limits_{(-c)}   \frac{  (\frac{q}{2\pi})^w (2\pi)^{2(s+w)}  }{ \Gamma(1-w) \cos(\tfrac{\pi}{2} (1-w))}  \cos^2 (\tfrac{\pi}{2}( \thalf-s-w)  )  \Gamma^2 (\thalf-s-w) \zeta^2 (\thalf-s-w) dw\\
&=  \intt \frac{     (\frac{q}{2\pi})^{-c+it} (2\pi)^{2(s-c+it)-1}  }{ \Gamma(1+c-it) \cos(\tfrac{\pi}{2} (1+c-it))}  \cos^2 (\tfrac{\pi}{2}( \thalf-s+c-it)  )  \Gamma^2 (\thalf-s+c-it)   \zeta^2 (\thalf-s+c-it)    dt.
\end{align*}
Since $\thalf-\Re(s)+c  >1$, we can expand
\[
\zeta^2 (\thalf-s+c-it) = \sum_{n\ge 1}  \frac{d(n)}{n^{\half-s+c-it}} 
\]
into an absolutely convergent sum. Exchanging the order of summation and integration, we get
\begin{align*}
I(s,q)= \sum_{n\ge 1} I(s,q,n),
\end{align*}
where
\begin{multline*}
I(s,q,n) =\intt   \frac{     (\frac{q}{2\pi})^{-c+it} (2\pi)^{2(s-c+it)-1}  }{ \Gamma(1+c-it) \cos(\tfrac{\pi}{2} (1+c-it))}  \cos^2 (\tfrac{\pi}{2}( \thalf-s+c-it)  )  \Gamma^2 (\thalf-s+c-it)   \frac{d(n)}{n^{\half-s+c-it}}    dt.
\end{multline*}
The idea now is to understand the oscillatory and stationary behavior of the integrand, and use that information to determine the integral. Such stationary phase analysis can be found in the literature to determine the integral up to a suitable error term (see for example \cite[Proposition 8.2]{bky}). However we cannot simply quote the literature because we need to first establish meromorphic continuation to a neighborhood of $s=0$, {\it after} which we are permitted to determine the value at $s=0$ up to a suitable error term. Therefore, we proceed by revisiting the steps of stationary phase analysis, being careful to obtain meromorphic continuation.

We study the integral $I(s,q,n)$ over finite dyadic intervals, as follows. Note that for each $n\in\mathbb{N}$, the union 
\[
 \bigcup\limits_{k>-\log_2(2\pi qn)} (- \tfrac32(2\pi qn)2^k, -\tfrac58(2\pi qn)2^k ) \ \bigcup \ (-10,10)  \bigcup\limits_{k>-\log_2(2\pi qn)} (\tfrac58(2\pi qn)2^k ,  \tfrac32(2\pi qn)2^k).
\]
forms an open cover of $(-\infty,\infty)$. Thus there exists a partition of unity of $(-\infty,\infty)$ by a collection of smooth functions of the form $W(\frac{t}{T})$, where $T=T(n)$ assumes one of the values below, and $W(x)$ is supported on 
\begin{align}
\label{cover} \tfrac58  &< x <\tfrac32 \  \  \ \ \text{ for } \ \ T=(2\pi qn)2^k, \ k\in\mathbb{Z}, \ k\ge  -\log_2(2\pi qn),  \\
\nonumber -20   &< x < 20 \ \ \ \text{ for } \ \ T=\thalf, \\
\nonumber  -\tfrac32  &< x < -\tfrac58 \  \ \text{ for }  \ \ T=(2\pi qn)2^k, \ k\in\mathbb{Z}, \ k\ge  -\log_2(2\pi qn),
\end{align}
with $W^{(j)}(x)\ll_j 1$ for any $j\ge 0$. Then we have that
\[
I(s,q,n)=\sum_{T} I_{T} (s,q),
\]
where
\begin{multline*}
I_T(s,q)=   \intt W\Big(\frac{t}{T}\Big) \frac{    (\frac{q}{2\pi})^{-c+it} (2\pi)^{2(s-c+it)-1}  }{ \Gamma(1+c-it) \cos(\tfrac{\pi}{2} (1+c-it))} \\
\times \cos^2 (\tfrac{\pi}{2}( \thalf-s+c-it)  ) \Gamma^2 (\thalf-s+c-it)   \frac{d(n)}{n^{\half-s+c-it}}    dt.
\end{multline*}

We first consider the contribution of $T=\frac12$ to $I(s,q)$. The integral $I_\half(s,q)$ is of bounded length, the integrand is holomorphic for $s\in\mathcal{S}_2$, and the sum $\sum_{n\ge 1}   \frac{d(n)}{n^{\half-\Re(s)+c}}$ is absolutely bounded for $s\in \mathcal{S}_2$. Thus $ \sum_{n\ge 1}  I_\half (s,q)$
analytically continues to $\mathcal{S}_2$, with \[ \sum_{n\ge 1}  I_\half(0,q)\ll q^{-c}.\] 
 
Now we restrict to the case $T\neq \half$ (so that $T\ge 1$).  We apply Lemma \ref{stirling} with $M=2c$ for $\Gamma(1+c-it) \cos(\tfrac{\pi}{2} (1+c-it))$ and $\cos (\tfrac{\pi}{2}( \thalf-s+c-it)  ) \Gamma (\thalf-s+c-it)$, which is permissible since $1+c>1$, $ \thalf-\Re(s)+c>1$, and $|t|> \frac58$. These conditions hold for $s$ in the wider set $\mathcal{S}_2$ too. Thus we get
 \begin{align*}
I_T(s,q)= (2\pi q)^{-c}  \frac{d(n)}{n^{\half-s+c}}  \int_{-\infty}^{\infty}  W\Big(\frac{t}{T}\Big)   \Big(\sum_{0\le m\le {2c}} a_m(s) t^{-m} + E(s,t) \Big)     |t|^{c-2s-\half} \exp(i\Phi(t))dt,
\end{align*}
where 
\[
\Phi(t)=-t\log \left( \frac{|t|}{2\pi e qn }\right),
\]
and $a_m(s)$ and $E(s,t)$ are holomorphic functions of $s\in \mathcal{S}_2$ such that 
\begin{align}
\label{a0s} a_m(s)\ll 1, \ \ \ E(s,t) \ll |t|^{-2c-1}, \ \ \ a_0(s) = (2\pi)^{2s-1} (\tfrac{\pi}{2})^\half e^{i \frac{\pi}{4}\mathrm{sgn}(t) }.
\end{align}
We point out here that since we applied Stirling's approximation to $\Gamma(z-it)\cos(\frac{\pi}{2}(z-it))$, we get by \eqref{a0val} that $a_0(s)$ has the factor $e^{-i \frac{\pi}{4}\mathrm{sgn}(-t)}=e^{i \frac{\pi}{4}\mathrm{sgn}(t)}$.

First, we address the contribution of $E(s,t)$ to $I(s,q)$. This equals
 \begin{align}
\label{R-contribution} \sum_{n\ge 1} \sum_{T \ge 1} (2\pi q)^{-c}  \frac{d(n)}{n^{\half-s+c}}  \int_{-\infty}^{\infty}  W\Big(\frac{t}{T}\Big)   E(s,t)      |t|^{c-2s-\half} \exp(i\Phi(t))dt,
\end{align}
which is bounded for $s\in \mathcal{S}_2$ by the absolutely convergent sum
 \begin{align*}
\sum_{n\ge 1} \sum_{T \ge 1} q^{-c}  \frac{d(n)}{n^{\half-\Re(s)+c}} T  \cdot T^{-2c-1}      T^{c-2\Re(s)-\half} \ll q^{-c} \sum_{n\ge 1}  \frac{d(n)}{n^{\half-\Re(s)+c}}   \sum_{T \ge 1}   T^{-c} \ll q^{-c}.\end{align*}
Thus \eqref{R-contribution} has analytic continuation to  $\mathcal{S}_2$ with value at $s=0$ bounded by $O(q^{-c})$. 

It remains to consider
\[
\sum_{n\ge 1} \ \sum_{0\le m\le 2c}  \ \sum_{T \ge 1} a_m(s) I_{T,m}(s,q),
\] 
where we define
 \begin{align}
\nonumber I_{T,m}(s,q)&:= (2\pi q)^{-c}  \frac{d(n)}{n^{\half-s+c}}  \int_{-\infty}^{\infty}  W\Big(\frac{t}{T}\Big)    t^{-m} |t|^{c-2s-\half} \exp(i\Phi(t))dt\\
\label{ITm} &=  (2\pi q)^{-c}   \frac{d(n)}{n^{\half-s+c}} T  \int_{-\infty}^{\infty}  W(x)   (xT)^{-m} |xT|^{c-2s-\half} \exp(i\Phi(xT) )dx.
\end{align}
The last equality follows by the substitution $x=\frac{t}{T}$. We now assume that $W(x)$ is supported on the positive real axis because the negative case is treated similarly. Thus $W(x)$ is supported on $(\frac58, \frac32)$. 

\

{\it Case I.} $T\neq 2\pi q n$. In this case we have $ \frac{T}{2\pi  qn } \ge 2$ or $ \frac{T}{2\pi  qn } \le \half$ by definition \eqref{cover}.  So
\[
\Big| \frac{d}{dx} i \Phi(xT) \Big| =  \Big| \frac{d}{dx} \Big(-i x T \log \left( \frac{xT}{2\pi e qn }\right)\Big)  \Big| = \Big| - i T \log \Big( x \frac{T}{2\pi  qn }\Big) \Big| \gg T
\]
for $x\in (\frac58, \frac32)$ and
\[
\frac{d^j}{dx^j} \Big( - i  \log \Big( x \frac{T}{2\pi  qn }\Big) \Big) =  i (j-1)! (-x)^{-j}  \asymp 1 \ \  \text{ for } j\ge 1.
\]
We can write
\begin{align*}
  \int_{-\infty}^{\infty}  W(x)   x^{-m+c-2s-\half} \exp(i\Phi(xT) )dx =    \int_{-\infty}^{\infty}  W(x)   x^{-m+c-2s-\half} \frac{ - i T \log ( x \frac{T}{2\pi  qn })}{ - i T \log ( x \frac{T}{2\pi  qn })} \exp(i\Phi(xT) )dx
 \end{align*}
 and then repeatedly integrate by parts to get
 \begin{align*}
 \int_{-\infty}^{\infty}  W(x)   x^{-m+c-2s-\half} \exp(i\Phi(xT) )dx 
& = \frac{1}{T}  \int_{-\infty}^{\infty}  \frac{d}{dx}\Big( \frac{W(x)   x^{-m+c-2s-\half} } { i  \log ( x \frac{T}{2\pi  qn }) } \Big) \exp(i\Phi(xT) )dx\\
& = \frac{1}{T^2}  \int_{-\infty}^{\infty} \frac{d}{dx} \Bigg( \frac{ \frac{d}{dx}\Big( \frac{W(x)   x^{-m+c-2s-\half} } {  i  \log ( x \frac{T}{2\pi  qn }) } \Big) }{  i  \log ( x \frac{T}{2\pi  qn }) } \Bigg) \exp(i\Phi(xT) )dx,
 \end{align*}
 and so on, with each of these successive integrals an entire function of $s$. In this way we see that for $s\in\mathcal{S}_2$, we have 
 \[
I_{T,m}(s,q)\ll_{k} q^{-c} T^{-k}
 \]
  for any $k\ge 0$. This ensures the absolute convergence of the sum
  \begin{align*}
\sum_{n\ge 1} \ \sum_{0\le m\le 2c}  \sum_{\substack{T \ge 1 \\ T\neq 2\pi q n}} a_m(s) I_{T,m}(s,q)
\end{align*}
for $s\in \mathcal{S}_2$ and therefore its analytic continuation to $\mathcal{S}_2$ with value at $s=0$ bounded by $O(q^{-c})$. 

\

{\it Case II.} $T= 2\pi  n q$. By construction \eqref{cover}, we are considering the unique function $W(\frac{t}{2\pi n q})$ in this partition which contains a neighborhood of $2\pi n q$ in its support. We have, using definition \eqref{ITm},
\begin{align*}
I_{2\pi  n q,m}(s,q)&= (2\pi q)^{-c}  \frac{d(n)}{n^{\half-s+c}} 2\pi n q \int_{-\infty}^{\infty}  W(x)   (x2\pi n q)^{-m+c-2s-\half} \exp(i\Phi(x2\pi n q ) )dx\\
&=  (2\pi  q)^{\half-m-2s}    \frac{d(n)}{n^{s+m}}  \int_{-\infty}^{\infty}  W(x)  x^{-m+c-2s-\half}  \exp(i\phi(x ) )dx,
\end{align*}
where
\[
\phi(x)= -2\pi n q x \log\Big(\frac{x}{e}\Big).
\]

We have
\[
\phi'(x)= -2\pi n q \log x, \ \  \ \phi^{(j)}(x)= 2\pi n q (j-2)! (-x)^{-j+1} \ \text{ for } j\ge 2. 
\]
Note that $\phi'(1)=0$ (so that $x=1$ is a stationary point). Taking a Taylor series expansion about $x=1$, we get
\[
\phi(x) = 2\pi n q\Big(1 - \frac{(x-1)^2}{2} + \frac{(x-1)^3}{6}-\ldots\Big)
\]
for $x\in (\frac58, \frac32)$, so that
\begin{align}
\label{phi-simp} \exp(i\phi(x ) ) = \exp\Big(  2\pi i n q\Big( - \frac{(x-1)^2}{2} + \frac{(x-1)^3}{6}- \ldots\Big)\Big).
\end{align}
This uses the nice simplification $e^{2\pi i nq}=1$.

We will evaluate $I_{2\pi  n q,m}(s,q)$ by stationary phase analysis.  As is the usual technique, we first isolate $x=1$ further. Let $w(x)$ be a smooth function such that
\begin{align*}
&w(x) = 1 \ \ \text{ for } 1-(nq)^{-\frac{5}{12}}< x <  1+(nq)^{-\frac{5}{12}},\\
&w(x) = 0 \ \ \text{ for } x< 1-2(nq)^{-\frac{5}{12}} \ \text{ or } x> 1+2(nq)^{-\frac{5}{12}},\\
&w^{(j)}(1) = 0 \ \ \text{ for } j\ge 1, \ \ \ w^{(j)}(x) \ll_j ((nq)^{\frac{5}{12}})^j  \ \ \text{ for } j\ge 0.
\end{align*}
We split up the integral as $I_{2\pi  n q,m}(s,q)= I_{2\pi  n q,m}^w(s,q)+ I_{2\pi  n q,m}^{1-w}(s,q)$, where
\begin{align*}
& I_{2\pi  n q,m}^w(s,q):=  (2\pi q) ^{\half-m-2s}   \frac{d(n)}{n^{s+m}}  \int_{-\infty}^{\infty}  W(x) w(x) x^{-m+c-2s-\half}  \exp(i\phi(x ) )dx,\\
& I_{2\pi  n q,m}^{1-w}(s,q):=  (2\pi  q)^{\half-m-2s}   \frac{d(n)}{n^{s+m}}  \int_{-\infty}^{\infty}  W(x) (1-w(x))  x^{-m+c-2s-\half}  \exp(i\phi(x ) )dx.
\end{align*}
Thus the integrand in $I_{2\pi  n q,m}^w(s,q)$ is supported very close to $x=1$ while the integrand in $I_{2\pi  n q,m}^{1-w}(s,q)$ is supported some distance away from $x=1$. 

Repeatedly integrating by parts in $I_{2\pi  n q,m}^{1-w}(s,q)$, and using \eqref{phi-simp}, we get
\begin{align*}
&\int_{-\infty}^{\infty}  W(x) (1-w(x))  (2\pi x )^{-m+c-2s-\half}   \exp(i\phi(x ) ) dx \\
&= \frac{1}{2\pi i n q}\int_{-\infty}^{\infty} \frac{d}{dx} \Big( \frac{W(x) (1-w(x))  (2\pi x )^{-m+c-2s-\half}}{  - (x-1) + \frac{(x-1)^2}{2}-\ldots }  \Big) \exp(i\phi(x ) ) dx \\
&= \frac{1}{(2\pi i n q)^2}\int_{-\infty}^{\infty} \frac{d}{dx} \Bigg( \frac{\frac{d}{dx} \Big( \frac{W(x) (1-w(x))  (2\pi x )^{-m+c-2s-\half}}{  - (x-1) + \frac{(x-1)^2}{2}-\ldots }}{{  - (x-1) + \frac{(x-1)^2}{2}-\ldots }}  \Bigg) \exp(i\phi(x ) ) dx,
\end{align*}
and so on, with each of these successive integrals an entire function of $s$. Since $ (nq)^{-\frac{5}{12}}\ll |x-1|\le \frac12$ in the support of $W(x)(1-w(x))$, and $w^{(j)}(x) \ll_j (nq)^{\frac{5}{12}j}$ for $j\ge 0$, we see that for $s\in\mathcal{S}_2$, after integrating by parts $k\ge 0$ times we have 
\[
\sum_{\substack{n\ge 1}} \ \sum_{0\le m\le 2c}   a_m(s) I_{2\pi n q,m}^{1-w}(s,q) \ll_k   \sum_{n\ge 1}  \ \sum_{0\le m\le 2c}   q^{\half-m-2\Re(s)}   \frac{d(n)}{n^{\Re(s)+m}} \Big(\frac{(nq)^{\frac{5}{6}}}{nq }\Big)^k \ll q^{-\frac{7c}{6}+1},
\]
where the last bound follows by taking $k=7c$. This ensures the absolute convergence of the sum above and therefore its analytic continuation to $\mathcal{S}_2$ with value at $s=0$ bounded by $O(q^{-c})$. 

It remains to consider $I_{2\pi  n q,m}^{w}(s)$, which we write as, using \eqref{phi-simp},
\begin{align}
\label{ITm2} I_{2\pi  n q,m}^w(s,q) = (2\pi  q)^{\half-m-2s}   \frac{d(n)}{n^{s+m}}  \int_{-\infty}^{\infty}  h(s,x)
 \exp( -i \pi n q (x-1)^2 ) dx,
\end{align}
where
\begin{align}
\label{hsx} h(s,x) =  W(x)  w(x) x^{-m+c-2s-\half}  e^{i2\pi n q(  \frac{(x-1)^3}{3}-\ldots)} .
\end{align}
Let 
\[
\hat{h}(s,y)= \intt h(s,x) e^{-2\pi i yx} dx
\]
denote the Fourier transform of $h$. Integrating by parts $k\ge 0$ times, we have for $s\in\mathcal{S}_2$ and $|y|>1$,
\begin{align}
\label{hk} \hat{h}(s,y) = \hat{h}_k(s,y) := \intt \Big( \frac{\partial^k}{\partial x^k} h(s,x)  \Big) \frac{e^{-2\pi i yx} }{(2\pi i y)^k }   dx 
\ll_k  \Big (\frac{(nq)^{\frac{5}{12}}}{|y|}\Big)^{k}.
\end{align}
This uses $w^{(j)}(x) \ll_j (nq)^{\frac{5}{12}j}$ for $j\ge 0$ and $n q |x-1|^2\ll (nq)^{\frac{5}{12}}$ in the support of $w(x)$.
For $|y|\le 1$, we have the trivial bound $\hat{h}(s,y) \ll 1$. Also note that $\hat{h}(s,y)$ (for $|y|\le 1$) and $\hat{h}_k(s,y) $ (for $|y|> 1$) are entire functions of $s$. By Fourier inversion, we have
\begin{align*}
& \int_{-\infty}^{\infty}  h(s, x)
e^{ -i \pi n q (x-1)^2 } dx =
 \lim_{\alpha\to\infty}  \int_{-\alpha}^{\alpha}  h(s,x) e^{ -i \pi n q (x-1)^2 } dx  \\
  =&\lim_{\alpha\to\infty} \int_{-\alpha}^{\alpha}  \Big( \intt \hat{h}(s,y) e^{2\pi i x y}  dy\Big)  e^{-i \pi n q (x-1)^2 } dx
=\lim_{\alpha\to\infty} \int_{-\alpha}^{\alpha}  \Big( \int_{-\alpha}^{\alpha} \hat{h}(s,y) e^{2\pi i x y}dy  \Big)  e^{-i \pi n q (x-1)^2 } dx,
  \end{align*}
  where the last line uses the upper bound for $\hat{h}(s,y)$ given in \eqref{hk}. We can exchange the order of integration to get
 \begin{align*}
 \lim_{\alpha\to\infty}   \int_{-\alpha}^{\alpha}  \hat{h}(s,y)  \int_{-\alpha}^{\alpha} e^{2\pi i  y x}   e^{-i \pi n q (x-1)^2  }dx  \ dy= \lim_{\alpha\to\infty}  \int_{-\alpha}^{\alpha}  \hat{h}(s,y) \int_{-\infty}^{\infty} e^{2\pi i  y x}   e^{ -i \pi n q (x-1)^2 } dx \ dy,
\end{align*}
where the inner integral was extended to infinity because for $|y|\le \alpha$,
\begin{align*}
 \int_{\alpha}^{\infty} &e^{2\pi i  y x -i \pi n q (x-1)^2 } dx  = \lim_{\beta\to\infty} e^{2\pi i  y  + i \pi \frac{y^2}{nq} }   \int_{\alpha}^{\beta}  e^{-i\pi nq (x-1- \frac{y}{nq})^2 } dx
 \end{align*}
 by completing the square, and  this equals
 \begin{align*}
\lim_{\beta\to\infty} e^{2\pi i  y  + i \pi \frac{y^2}{nq} } \Bigg(  \frac{e^{-i\pi nq (x-1- \frac{y}{nq})^2}}{-2i\pi nq (x-1- \frac{y}{nq})}  \Bigg|_{x=\alpha}^{x=\beta}  -  \int_{\alpha}^{\beta} \frac{e^{-i\pi nq (x-1- \frac{y}{nq})^2}}{2i\pi nq (x-1- \frac{y}{nq})^2} dx\Bigg) \ll \alpha^{-1}
 \end{align*}
by integrating by parts. Now,
 \begin{align*}
&\lim_{\alpha\to\infty}   \int_{-\alpha}^{\alpha}  \hat{h}(s,y) \int_{-\infty}^{\infty} e^{2\pi i  y x}   e^{ -i \pi n q (x-1)^2 } dx \ dy \\ =& \lim_{\alpha\to\infty}  \int_{-\alpha}^{\alpha}  \hat{h}(s,y) \frac{ e^{i \pi ( 2  y+  \frac{y^2}{nq}-\frac{1}{4}) }}{\sqrt{nq}} \ dy 
=\frac{ e^{-\frac{i\pi}{4}}}{\sqrt{nq}}  \int_{-\infty}^{\infty}  \hat{h}(s,y) e^{ 2 \pi    y i } e^{i \pi  \frac{y^2}{nq} }\ dy.
 \end{align*}
 by evaluating the Gaussian $x$-integral using \cite[equations 17.23.16, 17.23.17]{gr} and then extending the $y$-integral to infinity using \eqref{hk}.
 Writing
 \[
E(y) =\sum_{j=0}^{6c} \frac{1}{j!}\Big(  \frac{i\pi y^2}{nq} \Big)^j,
 \]
we have
 \begin{multline}
\label{taylor} \frac{ e^{-\frac{i\pi}{4}}}{\sqrt{nq}}  \int_{-\infty}^{\infty}  \hat{h}(s,y) e^{ 2 \pi    y i } e^{i \pi  \frac{y^2}{nq} }\ dy 
= \frac{ e^{-\frac{i\pi}{4}}}{\sqrt{nq}}  \int_{-\infty}^{\infty}  \hat{h}(s,y) e^{ 2 \pi    y i }  E(y)  dy 
\\
+ \frac{ e^{-\frac{i\pi}{4}}}{\sqrt{nq}}  \int_{-1}^{1}  \hat{h}(s,y) e^{ 2 \pi    y i } \Big(e^{i \pi  \frac{y^2}{nq} } -  E(y)\Big)  dy
 + \frac{ e^{-\frac{i\pi}{4}}}{\sqrt{nq}}  \int_{|y|>1}  \hat{h}_{k}(s,y) e^{ 2 \pi    y i } \Big(e^{i \pi  \frac{y^2}{nq} } -  E(y)\Big) \ dy,
 \end{multline}
where we used \eqref{hk} for the last integral. All the integrals on the right hand side of \eqref{taylor} are absolutely convergent, so they are entire functions of $s$. For the second and third integrals, by Taylor's theorem applied to the function $e^{ix}$, we have the following bounds for $s\in\mathcal{S}_2$:
\begin{align}
\label{bounds} &\frac{ e^{-\frac{i\pi}{4}}}{\sqrt{nq}}  \int_{-1}^{1}  \hat{h}(s,y) e^{ 2 \pi    y i } \Big(e^{i \pi  \frac{y^2}{nq} } -  E(y)\Big)  dy \ll \frac{1}{\sqrt{nq}} \Big(\frac{1}{nq}\Big)^{6c+1}  \ll \Big(\frac{1}{nq}\Big)^{6c+\frac32} , \\
\nonumber &\frac{ e^{-\frac{i\pi}{4}}}{\sqrt{nq}}  \int_{|y|>1}  \hat{h}_{k}(s,y) e^{ 2 \pi    y i } \Big(e^{i \pi  \frac{y^2}{nq} } -  E(y)\Big) \ dy \ll \frac{1}{\sqrt{nq}} \int_{|y|>1} \Big (\frac{(nq)^{\frac{5}{12}}}{|y|}\Big)^{12c+4} \Big(\frac{|y|^{2}}{nq}\Big)^{6c+1} dy \ll \Big(\frac{1}{nq}\Big)^{c+\frac16}, 
\end{align}
where we used \eqref{hk} with $k=12c+4$. Plugging back \eqref{taylor} into \eqref{ITm2}, we get that 
\begin{align}
 \label{plugback} &\sum_{\substack{n\ge 1\\0\le m\le 2c }}   a_m(s) I_{2\pi  n q,m}^w(s) =\sum_{\substack{n\ge 1\\ 0\le m\le 2c}}  a_m(s)  \sqrt{2\pi}  e^{-\frac{i\pi}{4}} (2\pi q)^{-m-2s}  \frac{d(n)}{n^{\half+s+m}}   \int_{-\infty}^{\infty}  \hat{h}(s,y) e^{ 2 \pi    y i }  E(y)  dy \\
\nonumber &+ \sum_{\substack{n\ge 1\\0\le m\le 2c }}    a_m(s) \sqrt{2\pi} e^{-\frac{i\pi}{4}} (2\pi q)^{-m-2s}   \frac{d(n)}{n^{\half+s+m}}   \int_{-\infty}^{\infty}  \hat{h}(s,y) e^{ 2 \pi    y i } \Big(e^{i \pi  \frac{y^2}{nq} } -  E(y)\Big)  dy. 
 \end{align}
In light of the bounds \eqref{bounds}, the second sum on the right hand side of \eqref{plugback} analytically continues to $\mathcal{S}_2$, with value at $s=0$ being bounded by $O(q^{-c+1})$. 

It remains to obtain the meromorphic continuation from $\mathcal{S}_1$ to $\mathcal{S}_2$ of 
\begin{align}
\label{lastone} \sum_{\substack{n\ge 1}} \ \sum_{0\le m\le 2c} \ \sum_{0\le j \le 6c}  a_m(s) I_{2\pi n q,m,j}^w (s,q), 
\end{align}
where
\begin{align*}
I_{2\pi n q,m,j}^w (s,q) :=& \sqrt{2\pi}  e^{-\frac{i\pi}{4}} (2\pi q)^{-m-2s}\frac{d(n)}{n^{\half+s+m}}   \int_{-\infty}^{\infty}  \hat{h}(s,y) e^{ 2 \pi   i y }  \frac{1}{j!}\Big(  \frac{i\pi y^2}{nq} \Big)^j dy.
\end{align*}
By Fourier inversion, we have
\[
 \int_{-\infty}^{\infty}  \hat{h}(s,y) e^{ 2 \pi   i y  }  y^{2j} dy = \frac{1}{(2\pi i )^{2j}} \frac{\partial^{2j}}{\partial x^{2j}} h(s,x)\bigg|_{x=1}.
\] 
Thus keeping in mind definition \eqref{hsx}, we have
\[
I_{2\pi n q,m,j}^w (s,q) = \sqrt{2\pi}  e^{-\frac{i\pi}{4}}  (2\pi q)^{-m-2s}   \frac{d(n)}{n^{\half+s+m}} P_{m,j}\Big(s,\frac{1}{nq}\Big),
\]
where $P_{m,j}(s,\frac{1}{nq})$ is entire in $s$ and polynomial in $\frac{1}{nq}$, and the degree of this polynomial is strictly greater than $0$ for $j>0$. For $j=0$, we note that $P_{m,0}(s,\frac{1}{nq})=h(s,1)=1$. Summing over $n$, we get that \eqref{lastone} equals
\[
\sum_{0\le m\le 2c} \ \sum_{0\le j \le 6c}  a_m(s) \sqrt{2\pi}  e^{-\frac{i\pi}{4}}  (2\pi q)^{-m-2s}   P_{m,j}(s,\tfrac{1}{nq}) \zeta^2(\thalf+s+m).
\]
This expression continues to a meromorphic function on $\mathcal{S}_2$ (because the Riemann Zeta function does) and we have that its value at $s=0$ equals, using \eqref{a0s}, 
\begin{align}
\label{Wpos} a_0(0) \sqrt{2\pi}  e^{-\frac{i\pi}{4}}   \zeta^2(\thalf) +  \sum_{n=1 }^{N} r_n' q^{-n}+O_N(q^{-N-1})= \frac{1}{2}  \zeta^2(\thalf) +  \sum_{n=1 }^{N} r_n' q^{-n}+O_N(q^{-N-1}), 
\end{align}
for some constants $r_n'$ and any $N\ge 1$, by taking $c$ large enough. 

Recall that for this calculation, we have been assuming that $W(x)$ is supported on the positive real axis. In the case that the support of $W(x)$ is on the negative real axis, there is a stationary point at $x=-1$. In this case the leading term from stationary phase is $a_0(0) \sqrt{2\pi}  e^{\frac{i\pi}{4}}   \zeta^2(\thalf)$, where $a_0(0)=(2\pi)^{-1}(\frac{\pi}{2})^{\half} e^{-\frac{i\pi}{4}} $ by \eqref{a0s}, so that  we obtain 
\begin{align}
\label{Wneg} \frac{1}{2}  \zeta^2(\thalf) +  \sum_{n=1 }^{N} r_n'' q^{-n}+O_N(q^{-N-1})
 \end{align}
 for some constants $r_n''$.
 Adding together \eqref{Wpos} and \eqref{Wneg}, we get \eqref{finallem}.
\endproof

{\it The sum over the odd characters.} The treatment of the integral
\begin{align}
\label{sin-int} \frac{1}{2\pi i}  \int_{(-c)} \Gamma(w) \sin\left(\tfrac{\pi w}{2}\right)  \left(\frac{q}{2\pi}\right)^w\zeta^2(\thalfps+w) dw,
 \end{align}
 required to obtain \eqref{resulteqn-odd}, follows similar calculations, but we highlight some important differences with the treatment of the integral $I(s,q)$. Instead of \eqref{cos}, we have
 \begin{align*}
 \sin(\tfrac{\pi}{2}(z+it)) &= \tfrac{1}{2i} \big( e^{i(\frac{\pi}{2}(z+it))} - e^{-i(\frac{\pi}{2}(z+it))}\big)= i  \mathrm{sgn}(t) \tfrac{1 }{2} e^{-i  z\tfrac{\pi}{2} \mathrm{sgn}(t) + \tfrac{\pi}{2} |t|} \big(1 - e^{iz \pi   \mathrm{sgn}(t)  -\pi |t|} \big).
\end{align*}
Thus the leading terms in Stirling's estimates for $ \Gamma(w) \sin\left(\tfrac{\pi w}{2}\right)$ and $ \Gamma(w) \cos\left(\tfrac{\pi w}{2}\right)$ differ only by a factor of $i  \mathrm{sgn}(t) $, and this extra factor is carried through the entire calculation. In the end we get that the value at $s=0$ of the leading term in the meromorphic continuation of the integral \eqref{sin-int} equals
\begin{align}
\label{end-sin}i \frac{ \zeta^2(\thalf)}{2}   -i \frac{ \zeta^2(\thalf)}{2}   = 0,
\end{align}
from the two stationary points (one positive and one negative).

\

{\bf Acknowledgments.} We are grateful to the anonymous referee for many insightful suggestions that helped to improve the manuscript -- in particular for comments regarding reciprocity formulae and for pointing out a mistake in the previous version of the manuscript. We also thank Agniva Dasgupta for a careful reading and valuable remarks.

\bibliographystyle{amsplain} 

\bibliography{bibliography}

\end{document}